\documentclass[reqno,12pt]{amsart}
\usepackage{amssymb,amsmath,amsthm,mathtools}
\usepackage{enumerate}
\usepackage[margin=.99in]{geometry}

\def\Z{\mathbb{Z}}
\def\Q{\mathbb{Q}}
\def\R{\mathbb{R}}
\def\H{\mathbb{H}}

\def\C{\mathbb{C}}
\def\({\left(}
\def\){\right)}
\def\SL{{\rm SL}}
\def\GL{{\rm GL}}
\newcommand{\sldots}{\mathinner{{\ldotp}{\ldotp}{\ldotp}}}
\newcommand{\pfrac}[2]{\left(\frac{#1}{#2}\right)}
\newcommand{\ptfrac}[2]{\left(\tfrac{#1}{#2}\right)}

\renewcommand{\pmatrix}[4]{\left(\begin{smallmatrix}#1 & #2 \\ #3 & #4\end{smallmatrix}\right)}
\renewcommand{\bar}[1]{\overline{#1}}

\DeclareMathOperator{\sgn}{sgn}

\def\ep{\varepsilon}

\let\temp\phi
\let\phi\varphi
\let\varphi\temp

\newtheorem{theorem}{Theorem}
\newtheorem{lemma}[theorem]{Lemma}
\newtheorem{corollary}[theorem]{Corollary}
\newtheorem{proposition}[theorem]{Proposition}

\theoremstyle{remark}
\newtheorem*{remark}{Remark}

\numberwithin{equation}{section}

\begin{document}

\title[Weak harmonic Maass forms of weight $5/2$]{Weak harmonic Maass forms of weight $5/2$ and a mock modular form for the partition function}

\date{\today}

\author{Scott Ahlgren}
\address{Department of Mathematics\\
University of Illinois\\
Urbana, IL 61801} 
\email{sahlgren@illinois.edu} 

\author{Nickolas Andersen}
\address{Department of Mathematics\\
University of Illinois\\
Urbana, IL 61801} 
\email{nandrsn4@illinois.edu}
 
\subjclass[2010]{Primary 11F37; Secondary 11P82}
\thanks{The first author was supported by a grant from the Simons Foundation (\#208525 to Scott Ahlgren).}

\begin{abstract}  We construct a natural basis for the space of weak harmonic Maass forms of weight $5/2$ on the full modular group.  The nonholomorphic part of the first element of this basis encodes the values of the ordinary partition function $p(n)$.
We obtain a formula for the coefficients of the mock modular forms of weight $5/2$ in terms of regularized inner products of weakly holomorphic
modular forms of weight $-1/2$, and we obtain Hecke-type relations among these mock modular forms.
\end{abstract}

\maketitle

\section{Introduction}
A number of recent works have considered bases for spaces of weak harmonic Maass forms of small weight.
 Borcherds \cite{Borcherds:1998} and Zagier \cite{Zagier:2002} (in their study of  infinite product expansions of modular forms, among many other topics)  made use of the basis $\{f_d\}_{d>0}$ defined by $f_{-d}=q^{-d}+O(q)$ for the space of weakly holomorphic modular forms
of weight $1/2$ in the Kohnen plus space of level $4$.  
Duke, Imamo\=glu and T\'oth  \cite{DIT:CycleIntegrals}  extended this to a basis $\{f_d\}_{d\in \Z}$ for the space of weak harmonic Maass forms of the same weight and level
and interpreted the coefficients in terms of   cycle integrals of  the modular $j$-function.  In subsequent work, \cite{DIT:InnerProducts}
they constructed a similar basis in the case of weight $2$ for the full modular group, and related the coefficients of these forms to regularized
inner products of an infinite family of modular functions.  To construct these bases requires various types of Maass-Poincar\'e series. 
These have played a fundamental role in the theory of weak harmonic Maass forms (see for example \cite{Bringmann:2006}, \cite{BJO:Hilbert} among many other 
works).

Here, we will construct a natural basis for the space of weak harmonic Maass forms of weight $5/2$ on $\SL_2(\Z)$ with a certain multiplier.
To develop the necessary notation, let the 
  Dedekind eta-function be defined by 
\[\eta(z) := q^{\frac{1}{24}} \prod_{n\geq 1} (1-q^n), \quad\quad q:=e^{2\pi i z}.\]
 We have the generating function
\[
	\eta^{-1}(z) = \sum_{n\geq 0} p(n) q^{n-\frac{1}{24}},
\]
where $p(n)$ is  the ordinary partition function. 
The transformation property
\begin{equation}\label{eq:epdef}
	\eta(\gamma z) = \ep(\gamma) (cz+d)^{\frac 12} \eta(z),\quad \quad\gamma\in \SL_2(\Z)
\end{equation}
defines a multiplier system $\ep$ of weight $1/2$ on $\SL_2(\Z)$ which takes values in the $24$-th roots of unity.
Let  $M^!_{-\frac 12}(\bar{\ep})$ denote the space of weakly holomorphic modular forms  of weight $-1/2$ and multiplier $\overline\ep$ on $\SL_2(\Z)$.
Then $\eta^{-1}(z)$ is the first element of a natural basis 
$\{g_m\}$ for this space.
To construct the basis, we set $g_1 := \eta^{-1}$, and for each $m\equiv 1\bmod{24}$ we define $g_m := g_1 P(j)$, where $P(j)$ is a suitable monic polynomial in the Hauptmodul $j(z)$ such that $g_m = q^{-\frac{m}{24}} + O(q^{\frac{23}{24}})$. We list the first few examples here:
\begin{align} \label{eq:g-m-construction}
	g_1 &= \eta^{-1} \notag \\
		&= q^{-\frac{1}{24}} + q^{\frac{23}{24}} + 2 q^{\frac{47}{24}} + 3 q^{\frac{71}{24}} + 5 q^{\frac{95}{24}} + 7 q^{\frac{119}{24}} + 11 q^{\frac{143}{24}} + 15 q^{\frac{167}{24}}+ O(q^{\frac{191}{24}}), \notag \\
	g_{25}  &= \eta^{-1} (j - 745) \notag \\
		&=  q^{-\frac{25}{24}}+196885 q^{\frac{23}{24}}+21690645 q^{\frac{47}{24}}+ 886187500 q^{\frac{71}{24}} + O(q^{\frac{95}{24}}),  \\
	g_{49} &= \eta^{-1}(j^2-1489j+160511) \notag \\
		&=  q^{-\frac{49}{24}}+42790636 q^{\frac{23}{24}}+40513206272 q^{\frac{47}{24}} + 8543738297129 q^{\frac{71}{24}} + O(q^{\frac{95}{24}}). \notag
\end{align}

Denote by $H_{\frac 52}^!(\ep)$ the space of weak harmonic Maass forms of weight $5/2$ and multiplier $\ep$ on $\SL_2(\Z)$.
These are real analytic functions which transform as
\[
	F(\gamma z) = \ep(\gamma) (cz+d)^{\frac 52} F(z),\quad\quad \gamma=\pmatrix abcd\in \SL_2(\Z),
\]
 which are annihilated by the weight $5/2$ hyperbolic Laplacian
\[\Delta_\frac52 : =-y^2\(\frac{\partial^2}{\partial x^2}+\frac{\partial^2}{\partial y^2}\)+ \frac52 iy \(\frac{\partial}{\partial x}+i\frac{\partial}{\partial y}\), \qquad z=x+iy
\]
and which have at most linear exponential growth at $\infty$
(see the next section for details).
If $M^!_\frac52(\ep)$ denotes the space of weakly holomorphic modular forms of weight $5/2$ and multiplier $\ep$, then
we have $M^!_\frac52(\ep)\subseteq H_{\frac 52}^!(\ep)$. 

Here we construct a basis  $\{h_m\}$ for the space $H_{\frac 52}^!(\ep)$.
For negative $m$ we have $h_m\in M_{\frac 52}^!(\ep)$,
while for positive $m$ we have $\xi_\frac52(h_m)= \ptfrac{6}{\pi m}^\frac32  g_m$,
where
$\xi_\frac52:H_{\frac 52}^!(\ep)\rightarrow M_{-\frac 12}^!(\overline\ep)$ 
is the $\xi$-operator defined in \eqref{eq:def-xi} below.
In particular, the nonholomorphic part of $h_1$ encodes the values of the partition function.

To state the first result it will be convenient  to introduce the notation
\[
	\beta(y) = \Gamma\(-\tfrac{3}{2},\tfrac{\pi  y}{6}\),
\]
where the incomplete gamma function is given by $\Gamma(s, y):=\int_y^\infty e^{-t}t^{s-1}\, dt$.  

\begin{theorem}\label{thm:basis}
	For each $m \equiv 1\pmod {24}$, there is a unique $h_m \in H_{\frac 52}^!(\ep)$ with Fourier expansion of the form
	\[h_m(z) = q^{\frac{m}{24}} + \sum\limits_{0<n\equiv 1(24)} p_m^+(n) q^{\frac{n}{24}}\qquad  \text{ if } m<0,\]
	and
\[h_m(z)= \sum\limits_{0<n\equiv 1\,(24)} p^+_m(n) q^{\frac{n}{24}} 
				+ \, i   \beta(-my) q^{\frac{m}{24}}  \,
				+ \sum\limits_{0<n\equiv -1\,(24)} p^-_m(n) \beta(ny) q^{-\frac{n}{24}} \qquad \text{ if } m>0.
	\]
	The set $\{h_m\}$ forms a basis for $H_{\frac 52}^!(\ep)$. For $m<0$ we have $h_m \in M_{\frac 52}^!(\ep)$,
	 while for $m>0$ we have 
	\[
		\xi_\frac52(h_m)= \ptfrac{6}{\pi m}^\frac32\cdot g_m \in M_{-\frac 12}^!(\bar{\ep}).
	\]
	Furthermore, for $m\neq n$, the coefficients $p_m^+(n)$ are real.
\end{theorem}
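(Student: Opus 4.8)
The plan is to realize each $h_m$ as a Maass--Poincar\'e series attached to the multiplier $\ep$. For $m<0$ I would take the classical holomorphic Poincar\'e series of weight $5/2$ with seed $q^{m/24}$; since the weight exceeds $2$ this series converges absolutely and defines an element of $M_{\frac52}^!(\ep)$ whose principal part is exactly $q^{m/24}$. For $m>0$ I would instead build a nonholomorphic Poincar\'e series from an exponentially growing $M$-Whittaker seed of the form $\mathcal M_{s}\!\(\tfrac{\pi m y}{6}\)e^{\pi i m x/12}$, specialized at the harmonic value $s=\tfrac54$ and normalized so that its $q^{m/24}$-term carries the coefficient $i$. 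Since $\tfrac54>1$, this series too converges absolutely at the special point, so no analytic continuation is required; there the seed is annihilated by $\Delta_{\frac52}$, so the result lies in $H_{\frac52}^!(\ep)$ and has at most linear exponential growth at $\infty$.

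Next I would compute the Fourier expansions by unfolding against $\Gamma_\infty$, so that each coefficient appears as a sum over moduli $c$ of an $\ep$-twisted Kloosterman sum $K_\ep(m,n;c)$ against a Bessel function. The decisive simplification is that at weight $5/2$ and at the harmonic point the nonholomorphic Whittaker contribution collapses to the incomplete gamma function $\Gamma(-\tfrac32,\cdot)=\beta$, which is what produces the terms $i\,\beta(-my)\,q^{m/24}$ and $\sum p_m^-(n)\,\beta(ny)\,q^{-n/24}$ of the stated expansion. Tracking the factors of $\tfrac1{24}$ coming from the multiplier, together with the normalization of the seed, should then pin down the constant $\ptfrac{6}{\pi m}^{\frac32}$.

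For uniqueness and the basis property the key point is that the space $M_{\frac52}(\ep)$ of holomorphic modular forms of weight $5/2$ and multiplier $\ep$ is trivial: a nonzero such $f$ would give $f/\eta\in M_2(\SL_2(\Z))=\{0\}$, since $\eta$ has weight $1/2$ and multiplier $\ep$ and is nonvanishing on $\H$. Uniqueness of each $h_m$ follows at once, because the difference of two candidates is a holomorphic form from which every distinguished term has been removed. For spanning, given $F\in H_{\frac52}^!(\ep)$ I would expand $\xi_{\frac52}(F)\in M_{-\frac12}^!(\bar\ep)$ in the known basis $\{g_m\}$, subtract the matching combination of the $h_m$ with $m>0$ to annihilate the nonholomorphic part, and then subtract the $h_m$ with $m<0$ to clear the principal part; the remainder lies in $M_{\frac52}(\ep)=\{0\}$. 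Linear independence is immediate from the distinct leading terms.

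Finally I would confirm the $\xi$-image and the reality of the coefficients. Applying $\xi_{\frac52}=2iy^{5/2}\,\overline{\partial_{\bar z}}$ termwise annihilates the holomorphic part, while the identity $\tfrac{d}{dw}\Gamma(s,w)=-e^{-w}w^{s-1}$ turns the $\beta$-terms into a weakly holomorphic form of weight $-1/2$ whose principal part is a multiple of $q^{-m/24}$; by uniqueness of the $g$-basis this is forced to equal $\ptfrac{6}{\pi m}^{\frac32}g_m$. Reality of $p_m^+(n)$ for $n\neq m$ follows from the explicit Kloosterman formula together with the classical fact that the eta-multiplier sums $K_\ep(m,n;c)$ are real (these are the sums appearing in Rademacher's formula for $p(n)$); the diagonal $n=m$ is excluded precisely because the seed contributes the purely imaginary coefficient $i$ to the $q^{m/24}$-term. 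I expect the main obstacle to be the exact evaluation of the nonholomorphic Fourier coefficients---extracting $\beta$ with the correct constant and confirming the normalization $\ptfrac{6}{\pi m}^{\frac32}$---since by contrast the convergence and the spanning argument are comparatively soft.
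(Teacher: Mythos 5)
Your uniqueness and spanning arguments are sound and essentially match the paper's: since $S_{\frac52}(\ep)=\{0\}$ and $S_{-\frac12}(\bar\ep)=\{0\}$, an element of $H_{\frac52}^!(\ep)$ is determined by its principal part together with the principal part of its image under $\xi_{\frac52}$, and your observation that $M_{\frac52}(\ep)=\{0\}$ via division by $\eta$ is a clean way to see this. The reality claim via $\bar{K(m,n;c)}=K(m,n;c)$ is also the paper's argument. The fatal problem is in the construction for $m>0$, which is the heart of the theorem.

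For $m>0$ your proposed Poincar\'e series is identically zero, so there is nothing to normalize and no $\beta$-terms ever appear. The point is that at the harmonic value $s=\tfrac k2=\tfrac54$ the $M$-Whittaker seed is \emph{not} exponentially growing: by \eqref{eq:M-eval} one has $y^{-\frac k2}M_{\frac k2,\frac{k-1}2}(y)=e^{-\frac y2}$, so for $m>0$ the seed degenerates to $q^{\frac m{24}}$, which decays. The resulting series $P_m\(z,\tfrac54\)$ is then a holomorphic cusp form of weight $\tfrac52$ with multiplier $\ep$, hence vanishes identically (this is exactly \eqref{eq:pmiszero} in the paper). You cannot repair this by instead seeding with the genuinely growing harmonic solution $\Gamma\(-\tfrac32,-\tfrac{\pi m y}{6}\)q^{\frac m{24}}$: that seed behaves like $y^{-\frac32}$ as $y\to0$, and the convergence criterion $k>2-2\alpha$ then demands $k>5$, so absolute convergence fails at $k=\tfrac52$. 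The paper's resolution---and the step your proposal is missing---is to form the derivative $Q_m(z):=\partial_s|_{s=\frac54}P_m(z,s)$, which is legitimate since $P_m(z,s)$ converges for $\mathrm{Re}(s)>1$ and $\tfrac54>1$; harmonicity of $Q_m$ is then a consequence of the very vanishing that kills your construction, via $\Delta_{\frac52}\partial_sP_m(z,s)=(1-2s)P_m(z,s)$ evaluated at $s=\tfrac54$. The expansion in the statement, including the coefficient $i$ on $\beta(-my)q^{\frac m{24}}$ and the incomplete-gamma terms, only emerges from differentiating the Whittaker and Bessel factors in $s$ (Proposition~\ref{prop:deriv} and Lemma~\ref{lemma:firstterm}); none of it is visible at $s=\tfrac54$ itself.
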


When $m=1$, we have 
\[
	\xi_{\frac 52}(h_1) = \ptfrac6\pi^\frac32 \eta^{-1}(z)=\ptfrac6\pi^\frac32 \sum_{n\geq-1} p\ptfrac{n+1}{24}q^\frac{n}{24}.
\]
Using Lemma~\ref{xilemma} below we obtain the following corollary.

\begin{corollary} \label{thm:m=1}
The function ${\bf P}(z)$ defined by
\begin{align*}
	{\bf P}(z) := 
	\sum_{0<n\equiv 1\,(24)} p_1^+(n) q^{\frac{n}{24}}
	- \sum_{-1\leq n\equiv 23\,(24)}  p(\tfrac{n+1}{24}) n^{\frac 32} \beta(ny) q^{-\frac{n}{24}}
\end{align*}
is a weight $\tfrac 52$   weak harmonic Maass form on $\SL_2(\Z)$ with multiplier  $\ep$.
\end{corollary}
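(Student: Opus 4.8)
The plan is to show that $\mathbf{P}$ coincides with the basis element $h_1$ produced by Theorem~\ref{thm:basis}; since $h_1\in H_{\frac 52}^!(\ep)$, this immediately yields the corollary. By construction the holomorphic part of $\mathbf{P}$ equals that of $h_1$, so the task reduces to identifying the nonholomorphic coefficients $p_1^-(n)$ occurring in $h_1$ and to checking that the distinguished term $i\beta(-y)q^{\frac{1}{24}}$ of $h_1$ is reproduced by the $n=-1$ summand of $\mathbf{P}$.

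First I would write down two expressions for $\xi_{\frac 52}(h_1)$. On one hand, Theorem~\ref{thm:basis} with $m=1$ gives $\xi_{\frac 52}(h_1)=\ptfrac6\pi^{\frac32}\eta^{-1}=\ptfrac6\pi^{\frac32}\sum_{n\geq-1}p\ptfrac{n+1}{24}q^{\frac{n}{24}}$. On the other hand, applying $\xi_{\frac 52}$ directly to the Fourier expansion of $h_1$ in Theorem~\ref{thm:basis}, I would use that $\xi_{\frac 52}$ annihilates the holomorphic part and that Lemma~\ref{xilemma} evaluates $\xi_{\frac 52}$ on each nonholomorphic building block $\beta(ny)q^{-\frac{n}{24}}$ as an explicit constant multiple of $q^{\frac{n}{24}}$. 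Because $\xi_{\frac 52}$ is conjugate linear, the image of $p_1^-(n)\beta(ny)q^{-\frac{n}{24}}$ is $\overline{p_1^-(n)}$ times that constant; matching these against the coefficients of $\ptfrac6\pi^{\frac32}\eta^{-1}$ term by term solves for each $p_1^-(n)$, and the reality assertion in Theorem~\ref{thm:basis} then gives $p_1^-(n)=-n^{\frac32}p\ptfrac{n+1}{24}$.

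It remains to reconcile the growing term. The $n=-1$ summand of the second sum in $\mathbf{P}$ equals $-(-1)^{3/2}p(0)\beta(-y)q^{\frac{1}{24}}=i\beta(-y)q^{\frac{1}{24}}$, which is exactly the distinguished term of $h_1$; hence $\mathbf{P}=h_1$ and so $\mathbf{P}\in H_{\frac 52}^!(\ep)$. The main point requiring care is the bookkeeping of constants and branches: the conjugate linearity of $\xi_{\frac 52}$ conjugates the coefficients, and one must fix the branch $(-1)^{3/2}=-i$ so that the unified sum over $-1\le n\equiv 23\,(24)$ in $\mathbf{P}$ simultaneously reproduces the growing term $i\beta(-y)q^{\frac1{24}}$ with the correct normalization and the decaying terms with coefficients $-n^{\frac32}p\ptfrac{n+1}{24}$. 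The evaluation in Lemma~\ref{xilemma} is itself only a differentiation of the incomplete gamma function $\beta$, so once its normalizing constant is pinned down the coefficient comparison is immediate.
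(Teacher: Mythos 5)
Your proof is correct and takes essentially the same route as the paper: the paper likewise deduces the corollary by applying Lemma~\ref{xilemma} term by term to the Fourier expansion of $h_1$ and matching against $\xi_{\frac 52}(h_1)=\left(\tfrac 6\pi\right)^{\frac 32}\eta^{-1}$, so that $\mathbf{P}=h_1\in H_{\frac 52}^!(\ep)$, with the branch choice $(-1)^{3/2}=-i$ accounting for the $n=-1$ term exactly as you describe. One small correction: the reality of $p_1^-(n)$ follows directly from the coefficient matching itself (you solve for $\overline{p_1^-(n)}$ and find it equals the manifestly real number $-n^{3/2}p\left(\tfrac{n+1}{24}\right)$), not from the reality assertion in Theorem~\ref{thm:basis}, which concerns the holomorphic-part coefficients $p_m^+(n)$.
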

The coefficients can be computed using the 
 formula  in Proposition \ref{prop:h-basis} below.  We have
\begin{multline*}
	{\bf P}(z) = \left(-3.96\!\sldots + i \tfrac 43 \sqrt{\pi}\right) q^{\frac{1}{24}} + 111.40\!\sldots q^{\frac{25}{24}} + 254.26\!\sldots q^{\frac{49}{24}} + 86.52\!\sldots q^{\frac{73}{24}} + \cdots \\
	+ i \beta(-y) q^{\frac{1}{24}} - 1 \cdot 23^{\frac 32} \beta(23y) q^{-\frac{23}{24}} - 2 \cdot 47^{\frac 32} \beta(47y) q^{-\frac{47}{24}} - 3 \cdot 71^{\frac 32} \beta(71y) q^{-\frac{71}{24}} + \cdots.
\end{multline*}
We will construct the functions $h_m(z)$  in Section \ref{sec:construction}  using Maass-Poincar\'e series.  Unfortunately, the standard construction      does not produce any nonholomorphic     forms in $H_{\frac 52}^!(\ep)$,
and we must therefore consider derivatives of these series
with respect to an auxiliary parameter. This method was recently used 
by Duke, Imamo\=glu and T\'oth \cite{DIT:InnerProducts} in the case of weight $2$
(see also \cite{BDR:MockPeriodFunctions} and \cite{JKK:Maass}).
The construction provides an exact formula for the coefficients $p^\pm_m(n)$. In particular, when $m=1$ we obtain the famous exact formula of Rademacher \cite{Rademacher:Partition} for $p(n)$ as a corollary
(see Section~\ref{section:rademacher} for details).

Work of Bruinier and Ono \cite{BO:AlgebraicFormulas}
provides an algebraic formula for the coefficients $p^-_m(n)$ (and in particular the values of $p(n)$)
 as the trace of  certain weak Maass forms over CM points.
Forthcoming work of the second author \cite{Andersen:2014} investigates the analogous arithmetic and geometric nature of the coefficients $p_m^+(n)$.
In analogy with \cite{DIT:CycleIntegrals} and \cite{Bruinier:2011}, the coefficients are interpreted as the
real quadratic traces (i.e. sums of cycle integrals) of  weak Maass forms.

\begin{remark}
	When $m<0$, we can construct the forms $h_m$ directly as in \eqref{eq:g-m-construction}. 
	We list a few examples here. Let $j':=-q\frac d{dq}j$. Then
	\begin{align*}
		h_{-23} &= \eta \, j' \\
			&= q^{-\frac{23}{24}} - q^{\frac{1}{24}} - 196885 q^{\frac{25}{24}} - 42790636 q^{\frac{49}{24}} - 2549715506 q^{\frac{73}{24}} + O(q^{\frac{97}{24}}), \\
		h_{-47} &= h_{-23}(j - 743) \\
			&= q^{-\frac{47}{24}} - 2 q^{\frac{1}{24}} - 21690645 q^{\frac{25}{24}} - 40513206272 q^{\frac{49}{24}}  + O(q^{\frac{73}{24}}), \\
		h_{-71} &= h_{-23}(j^2-1487j+355910) \\
			&= q^{-\frac{71}{24}} - 3 q^{\frac{1}{24}} - 886187500 q^{\frac{25}{24}} - 8543738297129 q^{\frac{49}{24}} + O(q^{\frac{73}{24}}).
	\end{align*}
Together with the family \eqref{eq:g-m-construction}, these form a ``grid" in the sense of Guerzhoy \cite{Guerzhoy:2009} or Zagier \cite{Zagier:2002}
(note that the integers appearing as coefficients are the same up to sign as those in \eqref{eq:g-m-construction}).
\end{remark}

For $m>0$, the formula for $p^+_m(n)$ which results from the construction is an infinite series whose terms are Kloosterman sums multiplied by a derivative of the $J$-Bessel function in its index. 
Here we give an alternate interpretation of these coefficients involving the regularized Petersson inner product,
in analogy with  \cite{DIT:InnerProducts} and \cite{Duke:2011a}.

For modular forms $f$ and $g$ of weight $k$, define
\begin{equation} \label{eq:def-reg-inner-product}
	\langle f, g \rangle_\text{reg} = \lim_{Y\to \infty} \int_{\mathcal{F}(Y)} f(z) \bar{g(z)} y^k\,  \tfrac{dx\,dy}{y^2},
\end{equation}
provided that this limit exists. Here $\mathcal{F}(Y)$ denotes the usual fundamental domain for $\SL_2(\Z)$ truncated at height $Y$. 
\begin{theorem}\label{thm:innerprod}
	For   positive $m,n\equiv 1 \pmod{24}$ with $m\neq n$ we have
\[
	p^+_m(n) := \ptfrac{6}{\pi m}^{\frac 32} \cdot \langle g_m, g_n \rangle_{\rm reg}.
\]
\end{theorem}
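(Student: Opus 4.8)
The plan is to realize $\langle g_m, g_n\rangle_{\rm reg}$ as a regularized pairing that collapses onto a single Fourier coefficient, in analogy with the weight-$2$ computation of Duke, Imamo\=glu and T\'oth \cite{DIT:InnerProducts}. Since $\xi_\frac52(h_m) = \ptfrac{6}{\pi m}^\frac32 g_m$, we have $g_m = \ptfrac{\pi m}{6}^\frac32 \xi_\frac52(h_m)$, so
\[
	\langle g_m, g_n\rangle_{\rm reg} = \ptfrac{\pi m}{6}^\frac32 \lim_{Y\to\infty}\int_{\mathcal{F}(Y)} \xi_\frac52(h_m)\,\bar{g_n}\, y^{-\frac12}\,\tfrac{dx\,dy}{y^2}.
\]
Writing $\xi_\frac52(h_m) = 2iy^\frac52\,\overline{\partial_{\bar z}h_m}$ and using $y^{\frac52-\frac12-2}=1$, the integrand becomes $2i\,\overline{(\partial_{\bar z}h_m)\,g_n}$. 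Because $g_n$ is holomorphic, the $1$-form $\omega := h_m g_n\,dz$ satisfies $d\omega = 2i(\partial_{\bar z}h_m)\,g_n\,dx\wedge dy$, so the integrand is (a constant multiple of) the conjugate of an exact form. First I would apply Stokes' theorem on $\mathcal{F}(Y)$ to reduce the integral to a boundary term $\int_{\partial\mathcal{F}(Y)}\overline{h_m g_n}\,d\bar z$.

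The key structural input is that $h_m g_n$ has weight $\frac52-\frac12=2$ and trivial multiplier $\ep\bar\ep=1$, so $\omega$ is $\SL_2(\Z)$-invariant. Hence the contributions of the two vertical sides of $\partial\mathcal{F}(Y)$ cancel (they are identified by $z\mapsto z+1$), as do the two arcs of the lower boundary (identified by $z\mapsto -1/z$), and only the horizontal segment at height $Y$ survives. On that segment $d\bar z = dx$, so the computation reduces to $\lim_{Y\to\infty}\int_{-1/2}^{1/2}\overline{h_m(x+iY)\,g_n(x+iY)}\,dx$, i.e. to the constant-in-$x$ Fourier coefficient of $\overline{h_m g_n}$ at height $Y$. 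I would then extract this coefficient from the Fourier expansions. Since the exponents of $h_m$ lie in $\tfrac1{24}(1+24\Z)$ and those of $g_n$ in $\tfrac1{24}(23+24\Z)$, every product of a term of $\overline{h_m}$ with a term of $\overline{g_n}$ has integral frequency in $x$, so the integral isolates exactly the matched terms. Three families arise: (i) the holomorphic coefficient $p_m^+(n')$ paired against the principal term $q^{-n/24}$ of $g_n$, which contributes only when $n'=n$ and yields $\overline{p_m^+(n)}$ with constant $y$-dependence; (ii) the nonholomorphic term $i\beta(-my)q^{m/24}$ paired against $q^{-n/24}$, which would require $m=n$ and is therefore absent under the hypothesis $m\neq n$; and (iii) the terms $p_m^-(\ell)\beta(\ell y)q^{-\ell/24}$ paired against matching positive-index terms of $g_n$, whose coefficients carry a factor $\beta(\ell Y)=\Gamma(-\tfrac32,\tfrac{\pi\ell Y}{6})\to 0$ as $Y\to\infty$.

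Taking the limit leaves only $\overline{p_m^+(n)}$, and since the coefficients $p_m^+(n)$ are real for $m\neq n$ by Theorem \ref{thm:basis}, I obtain $\langle g_m, g_n\rangle_{\rm reg} = \ptfrac{\pi m}{6}^\frac32 p_m^+(n)$, which is equivalent to the claim.

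The main obstacle is analytic rather than formal: one must justify the regularized limit and the application of Stokes' theorem despite the exponential growth of $h_m$ at the cusp, and verify that the truncated integrals converge. Here the hypothesis $m\neq n$ is essential --- precisely the family (ii) above produces, when $m=n$, a term $-i\beta(-mY)$ whose incomplete gamma function has a negative second argument and grows like $e^{\pi mY/6}$, so that $\langle g_m, g_m\rangle_{\rm reg}$ diverges. Establishing convergence for $m\neq n$ amounts to checking that the only constant-in-$x$ contributions to $g_m\,\bar{g_n}\,y^{-1/2}$ that survive have exponentially decaying $y$-dependence, which follows once the matching of exponents above is carried out with care.
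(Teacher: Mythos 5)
Your proposal is correct and follows essentially the same route as the paper: its Lemma~\ref{innerprodlemma} is precisely your Stokes-theorem step (stated there for general weight $k$), and the remainder of its proof is the same extraction of the constant Fourier coefficient on the segment at height $Y$, with the term-by-term integration and the vanishing of the $\beta$-terms justified by the bounds \eqref{eq:coeffest} and the decay \eqref{eq:gammaas}. The only cosmetic difference is that you work with the conjugate integral $\int \xi_{\frac 52}(h_m)\,\overline{g_n}\, y^{-\frac 12}\,\tfrac{dx\,dy}{y^2}$ and appeal to the reality of $p_m^+(n)$ at the end, whereas the paper evaluates $\int_{-\frac 12+iY}^{\frac 12+iY} g_n h_m\, dz$ directly.
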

\noindent We note that when $n=m$ the integral defining this inner product  
does not converge.

The following is an immediate corollary of Theorem~\ref{thm:innerprod}.
\begin{corollary} \label{cor:symmetry}
For positive $m,n\equiv 1 \pmod{24}$
we have 
\[
	n^\frac32 \cdot p_n^+(m)=m^\frac32\cdot  p_m^+(n).
\]
\end{corollary}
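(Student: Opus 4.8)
The plan is to derive this symmetry directly from Theorem~\ref{thm:innerprod}, so that the only real work is to check that the relevant regularized inner products are genuinely symmetric (not merely Hermitian-symmetric). First I would dispose of the diagonal case $m=n$: there the asserted identity $n^{\frac 32}p_n^+(m)=m^{\frac 32}p_m^+(n)$ reads $m^{\frac 32}p_m^+(m)=m^{\frac 32}p_m^+(m)$ and holds trivially, so no convergence of the inner product is required (which is fortunate, since by the remark following Theorem~\ref{thm:innerprod} the defining integral diverges precisely when $m=n$).

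For $m\neq n$, applying Theorem~\ref{thm:innerprod} twice gives $m^{\frac 32}p_m^+(n)=\ptfrac 6\pi^{\frac 32}\langle g_m,g_n\rangle_{\rm reg}$ and $n^{\frac 32}p_n^+(m)=\ptfrac 6\pi^{\frac 32}\langle g_n,g_m\rangle_{\rm reg}$, so the corollary reduces to the single identity $\langle g_m,g_n\rangle_{\rm reg}=\langle g_n,g_m\rangle_{\rm reg}$. Directly from the definition \eqref{eq:def-reg-inner-product} the form is Hermitian, i.e.\ $\langle g_m,g_n\rangle_{\rm reg}=\overline{\langle g_n,g_m\rangle_{\rm reg}}$ (both regularized limits exist because $m\neq n$), so it suffices to show that each of these regularized inner products is a real number.

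To prove reality I would exploit the fact that the basis elements $g_m$ have real (indeed integral) Fourier coefficients, as exhibited in the construction \eqref{eq:g-m-construction}. Writing $g_m(z)=\sum a_m(k)q^{k/24}$ with $a_m(k)\in\R$, a termwise computation shows $g_m(-\bar z)=\overline{g_m(z)}$. I would then apply the substitution $z\mapsto -\bar z$, the reflection across the imaginary axis, which is an orientation-reversing isometry of $\H$ preserving both the truncated fundamental domain $\mathcal F(Y)$ and the hyperbolic measure $\tfrac{dx\,dy}{y^2}$, and which fixes $y=\operatorname{Im}z$. Since $|\ep|=1$ the multipliers of $g_m$ and $\overline{g_n}$ cancel, so the integrand $g_m(z)\,\overline{g_n(z)}\,y^{-\frac 12}$ is $\SL_2(\Z)$-invariant; and under the reflection it is carried to its own complex conjugate. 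Hence the integral over $\mathcal F(Y)$ equals its conjugate for every $Y$, and letting $Y\to\infty$ shows $\langle g_m,g_n\rangle_{\rm reg}\in\R$. Combined with the Hermitian symmetry this gives $\langle g_m,g_n\rangle_{\rm reg}=\langle g_n,g_m\rangle_{\rm reg}$, and therefore the claim.

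There is essentially no serious obstacle here---this is why it is billed as an immediate corollary---but the one point deserving care is the interplay of the reflection argument with the regularization: one must verify that $z\mapsto -\bar z$ preserves the \emph{truncated} domain $\mathcal F(Y)$ (not only the full fundamental domain), so that the equality ``integral $=$ its conjugate'' already holds at each finite level $Y$ before passing to the limit. Once this is in hand the reality, and hence the symmetry, follows at once.
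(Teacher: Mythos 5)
Your proof is correct, and its skeleton---apply Theorem~\ref{thm:innerprod} twice, invoke the Hermitian symmetry $\langle g_m,g_n\rangle_{\rm reg}=\overline{\langle g_n,g_m\rangle_{\rm reg}}$, and reduce everything to the reality of these inner products---is precisely why the paper labels this an immediate corollary. The difference is in how reality is obtained. The paper already has it on hand: the final assertion of Theorem~\ref{thm:basis} states that $p_m^+(n)$ is real for $m\neq n$ (proved there from the identity $\overline{K(m,n;c)}=K(m,n;c)$, which comes from $s(-d,c)=-s(d,c)$), so $\langle g_m,g_n\rangle_{\rm reg}=\bigl(\tfrac{\pi m}{6}\bigr)^{3/2}p_m^+(n)$ is real and Hermitian symmetry finishes the argument in one line. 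You instead re-prove reality from scratch via the reflection $z\mapsto -\bar{z}$: since the $g_m$ have integral Fourier coefficients one has $g_m(-\bar{z})=\overline{g_m(z)}$, and since $\mathcal{F}(Y)$ and the hyperbolic measure are reflection-invariant, each truncated integral equals its own complex conjugate. This is a valid, self-contained alternative; what it buys is independence from the reality clause of Theorem~\ref{thm:basis} (and hence from the Kloosterman-sum computation), at the cost of extra work that the paper's reader does not need. Your flagged point of care---that the \emph{truncated} domain is reflection-symmetric---is the right thing to check, and it holds because $\mathcal{F}(Y)$ is cut out of the reflection-symmetric $\mathcal{F}$ by the reflection-invariant condition $y\leq Y$. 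Your explicit treatment of the diagonal case $m=n$, where the inner product diverges but the asserted identity is vacuous, is also appropriate, since the corollary as stated does not exclude it.
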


There are also Hecke relations among the forms $h_m$. 
Let $T_{\frac 52}(\ell^2)$ denote the Hecke operator of index $\ell^2$  on $H_{\frac 52}^!(\ep)$  
 (see Section \ref{sec:hecke} for definitions).
\begin{theorem}\label{thm:hecke}
For any  $m\equiv 1\pmod{24}$ and for any prime $\ell\geq 5$ we have

	\[
		h_m \big| T_{\frac 52}(\ell^2) = \ell^3 h_{\ell^2 m} + \ptfrac{3m}\ell  \ell  h_m.
	\]
\end{theorem}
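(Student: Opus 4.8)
The plan is to exploit the rigidity of the basis $\{h_m\}$. By the uniqueness in Theorem~\ref{thm:basis}---equivalently, because $M_{\frac 52}(\ep)=0$ (any holomorphic form of weight $\tfrac52$ and multiplier $\ep$ yields, after division by $\eta$, a holomorphic form in $M_2(\SL_2(\Z))=0$) and $M_{-\frac12}(\bar{\ep})=0$---every $F\in H_{\frac 52}^!(\ep)$ is determined by two finite pieces of data: its holomorphic principal part $\sum_{n<0}c^+(n)q^{\frac n{24}}$ and the principal part of $\xi_\frac52(F)\in M_{-\frac12}^!(\bar{\ep})$. Granting (from Section~\ref{sec:hecke}) that $T_{\frac 52}(\ell^2)$ preserves $H_{\frac 52}^!(\ep)$, the difference
\[
\Phi := h_m\big|T_{\frac 52}(\ell^2)-\ell^3 h_{\ell^2 m}-\ptfrac{3m}{\ell}\,\ell\, h_m
\]
lies in $H_{\frac 52}^!(\ep)$, and it suffices to show that the holomorphic principal part of $\Phi$ and the principal part of $\xi_\frac52(\Phi)$ both vanish; then $\Phi\in M_{\frac 52}(\ep)=0$.

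First I would treat the $\xi$-data using the compatibility of $\xi_\frac52$ with the Hecke action. Since $\xi_\frac52$ sends weight $\tfrac52$ to weight $2-\tfrac52=-\tfrac12$, there is an intertwining of the form $\xi_\frac52\big(F|T_{\frac 52}(\ell^2)\big)=\ell^{\,2k-2}\,\xi_\frac52(F)\big|T_{-\frac12}(\ell^2)$ with $k=\tfrac52$, i.e.\ with constant $\ell^3$. Applying this to $F=h_m$ and using $\xi_\frac52(h_m)=\ptfrac{6}{\pi m}^{\frac32}g_m$ reduces the claim to the parallel relation in weight $-\tfrac12$,
\[
g_m\big|T_{-\frac12}(\ell^2)=\ell^{-3}g_{\ell^2 m}+\ptfrac{3m}{\ell}\,\ell^{-2}g_m,
\]
which I would establish by the same principal-part matching inside $M_{-\frac12}^!(\bar{\ep})$, reading the coefficients of $q^{-\frac{\ell^2 m}{24}}$ and $q^{-\frac m{24}}$ off the coefficient formula for $T_{-\frac12}(\ell^2)$ in Section~\ref{sec:hecke}. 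Tracking the normalizing factors $\ptfrac{6}{\pi m}^{\frac32}$ and $\ptfrac{6}{\pi\ell^2 m}^{\frac32}$ converts $\ell^{-3},\ell^{-2}$ into the $\ell^{3},\ell$ of the statement, so that the principal part of $\xi_\frac52(\Phi)$ vanishes.

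It remains to match the holomorphic principal parts. For $m<0$, $h_m$ is weakly holomorphic with leading term $q^{\frac m{24}}$, and I would apply the three-term recursion $c(N)\mapsto c(\ell^2N)+\pfrac{3N}{\ell}\,\ell\,c(N)+\ell^3 c(N/\ell^2)$ from Section~\ref{sec:hecke} to its coefficients: the raising term produces $\ell^3 q^{\frac{\ell^2 m}{24}}$ and the middle term produces $\pfrac{3m}{\ell}\,\ell\,q^{\frac m{24}}$, exactly cancelling the principal part of $\ell^3 h_{\ell^2 m}+\ptfrac{3m}{\ell}\ell\, h_m$. For $m>0$ the holomorphic part of $h_m$ has no principal part, and since $\Phi$ is already known to be weakly holomorphic from the previous step, one only needs that the recursion creates no negative exponents, which is clear. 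In either case $\Phi\in M_{\frac 52}(\ep)=0$.

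The main obstacle sits in Section~\ref{sec:hecke}: proving that $T_{\frac 52}(\ell^2)$ really maps $H_{\frac 52}^!(\ep)$ into itself and intertwines with $\xi_\frac52$ as above. This requires showing that the Hecke double-coset action preserves annihilation by $\Delta_\frac52$ and the allowed growth, and---most delicately---that it acts on the nonholomorphic coefficients (those multiplying $\beta(-my)q^{\frac m{24}}$ and $\beta(ny)q^{-\frac n{24}}$) through the same index-scaling recursion, so that the incomplete gamma functions reindex consistently. The genuinely delicate case is $\ell^2\mid m$: there the symbol $\pfrac{3m}{\ell}$ vanishes while the term $c(\ell^2N)$ threatens to contribute a spurious $h_{m/\ell^2}$ to the defining data, and verifying that this contribution does not survive---so that the relation collapses to precisely the two stated terms---is the crux of the argument.
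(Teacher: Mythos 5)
Your core argument is the paper's own proof in different packaging: both rely on the intertwining relation \eqref{eq:heckegen}, on the weight $-\tfrac12$ relation \eqref{eq:gmhecke} for the $g_m$ obtained by comparing principal parts, and on the rigidity supplied by Theorem~\ref{thm:basis} (you express this by showing that the difference $\Phi$ has vanishing principal parts on both the holomorphic and nonholomorphic sides, while the paper expands $h_m\big|T_{\frac52}(\ell^2)=\sum c_n h_n$ and reads off the $c_n$ after applying $\xi_{\frac52}$; the mechanism and the bookkeeping with the factors $\ptfrac{6}{\pi m}^{3/2}$ are identical).

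The genuine problem is your last paragraph. When $\ell^2\mid m$, the contribution of $h_{m/\ell^2}$ that you propose to kill does \emph{not} die, and the two-term identity you are trying to prove is false in that case, so the ``verification'' you describe as the crux cannot be carried out. Concretely, let $m<0$ with $\ell^2\mid m$ (note $\ell^2\equiv 1\pmod{24}$, so $m/\ell^2\equiv 1\pmod{24}$ indexes a basis element) and write $h_m=\sum b(n)q^{n/24}$, so that $b(m)=1$ and $b(n)=0$ for every other $n\le 0$. The coefficient of $q^{m/(24\ell^2)}$ in $h_m\big|T_{\frac52}(\ell^2)$ is
\[
	b\left(\ell^2\cdot\tfrac{m}{\ell^2}\right)+\ell\ptfrac{3m/\ell^2}{\ell}b\left(\tfrac{m}{\ell^2}\right)+\ell^3\, b\left(\tfrac{m}{\ell^4}\right)=b(m)=1,
\]
while $\ell^3 h_{\ell^2m}+\ptfrac{3m}{\ell}\ell\, h_m$ has no such term, since $\ptfrac{3m}{\ell}=0$ when $\ell\mid m$. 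The same principal-part comparison shows that \eqref{eq:gmhecke} must be corrected to $g_m\big|T_{-\frac12}(\ell^2)=\ell^{-3}g_{\ell^2m}+\ptfrac{3m}{\ell}\ell^{-2}g_m+g_{m/\ell^2}$ when $\ell^2\mid m$, and feeding this through \eqref{eq:heckegen} exactly as in your first step yields, for instance, $h_{25}\big|T_{\frac52}(25)=125\,h_{625}+h_{1}$ rather than $125\,h_{625}$. So the correct general statement carries a third term $h_{m/\ell^2}$ (declared to be $0$ when $\ell^2\nmid m$); your instinct that $\ell^2\mid m$ is the delicate case was right, but the resolution goes the opposite way from what you predict. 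Note that the paper's own proof shares this blind spot, since it asserts \eqref{eq:gmhecke} ``by comparing principal parts'' with only two terms. Both your argument and the paper's are complete and correct exactly when $\ell^2\nmid m$.
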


Using Theorem~\ref{thm:hecke} it is possible to deduce many relations among the coefficients $p_m^+(n)$.  We 
record a typical example as a corollary.
\begin{corollary}\label{cor:hecke}
If $m,n\equiv 1\pmod{24}$ and $\ell\geq 5$ is a prime with $\pfrac{m}\ell=\pfrac{n}{\ell}$, then 
	\[
		p_m^+(\ell^2 n) = \ell^3  p_{\ell^2 m}^+(n).
	\]
\end{corollary}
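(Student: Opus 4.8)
The plan is to deduce Corollary~\ref{cor:hecke} directly from the Hecke relation in Theorem~\ref{thm:hecke} by matching Fourier coefficients. First I would apply Theorem~\ref{thm:hecke} to the form $h_m$, writing
\[
	h_m \big| T_{\frac 52}(\ell^2) = \ell^3 h_{\ell^2 m} + \ptfrac{3m}\ell \ell\, h_m,
\]
and then extract the coefficient of $q^{\frac{\ell^2 n}{24}}$ on both sides. The key observation is that the index $\ell^2 m$ appearing in the term $h_{\ell^2 m}$ has holomorphic-part coefficients $p_{\ell^2 m}^+(n)$ indexed so that the coefficient of $q^{\frac{n}{24}}$ in $h_{\ell^2 m}$ is exactly $p_{\ell^2 m}^+(n)$. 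Thus the right-hand side contributes $\ell^3 p_{\ell^2 m}^+(n)$ together with $\ptfrac{3m}\ell \ell\, p_m^+(\ell^2 n)$ in the coefficient of $q^{\frac{\ell^2 n}{24}}$.

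Second I would compute the action of $T_{\frac 52}(\ell^2)$ on the Fourier expansion of $h_m$ explicitly, using the definition of the Hecke operator from Section~\ref{sec:hecke}. For a half-integral weight Hecke operator of index $\ell^2$ acting on a form with multiplier $\ep$, the coefficient of $q^{\frac{N}{24}}$ in $h_m \big| T_{\frac 52}(\ell^2)$ is a linear combination of $p_m^+(\ell^2 N)$, a middle term involving a quadratic character value and $p_m^+(N)$, and a term $\ell^{2k-2} p_m^+(N/\ell^2)$ when $\ell^2 \mid N$. Specializing to $N = \ell^2 n$ and reading off the coefficient of $q^{\frac{\ell^2 n}{24}}$, the divisibility condition $\ell^2 \mid N$ is satisfied, so all three terms are in principle present; the point is to track precisely how the character factor $\pfrac{n}{\ell}$ (or equivalently $\pfrac{m}{\ell}$, since they are assumed equal) enters and to see which terms survive.

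Third, the arithmetic heart of the argument is the hypothesis $\pfrac{m}{\ell} = \pfrac{n}{\ell}$, which forces a cancellation. Equating the coefficient of $q^{\frac{\ell^2 n}{24}}$ on both sides of the Hecke relation and substituting the explicit Hecke formula on the left, the middle (character) terms and the lower term will combine with the $\ptfrac{3m}\ell \ell\, p_m^+(\ell^2 n)$ contribution on the right so that everything except $\ell^3 p_{\ell^2 m}^+(n)$ and $\ell^3 p_m^+(\ell^2 n)$ cancels. The condition $\pfrac{m}{\ell} = \pfrac{n}{\ell}$ is exactly what makes the stray character-weighted terms agree and drop out, yielding $p_m^+(\ell^2 n) = \ell^3 p_{\ell^2 m}^+(n)$ after dividing by $\ell^3$.

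I expect the main obstacle to be bookkeeping rather than any deep idea: one must be careful about the exact normalization of the half-integral weight Hecke operator $T_{\frac 52}(\ell^2)$ (the power of $\ell$ in each term, the precise quadratic character symbol, and the shift by $24$ in the exponents of $q$ coming from the multiplier $\ep$), and one must correctly identify which Fourier coefficient of which basis element $h_m$ or $h_{\ell^2 m}$ is being compared. In particular, verifying that the character factor appearing in the Hecke operator's formula matches $\pfrac{\cdot}{\ell}$ under the congruence $m, n \equiv 1 \pmod{24}$, and confirming that the term $\ptfrac{3m}{\ell}\ell\, h_m$ in Theorem~\ref{thm:hecke} exactly accounts for the unwanted coefficients, is where the hypothesis $\pfrac{m}{\ell}=\pfrac{n}{\ell}$ must be invoked cleanly. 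Once the normalizations are pinned down, the corollary follows by a short linear-algebra comparison of coefficients.
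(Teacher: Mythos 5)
Your overall strategy---comparing Fourier coefficients on the two sides of the Hecke relation of Theorem~\ref{thm:hecke} and using the hypothesis $\pfrac{m}{\ell}=\pfrac{n}{\ell}$ to cancel the character terms---is the intended deduction, but your execution extracts the wrong coefficient, and as written the argument does not close up. You propose to read off the coefficient of $q^{\frac{\ell^2 n}{24}}$ on both sides. On the right-hand side, the coefficient of $q^{\frac{\ell^2 n}{24}}$ in $\ell^3 h_{\ell^2 m}$ is $\ell^3 p^+_{\ell^2 m}(\ell^2 n)$, not $\ell^3 p^+_{\ell^2 m}(n)$; your ``key observation'' silently combines the coefficient of $q^{\frac{n}{24}}$ from one term with the coefficient of $q^{\frac{\ell^2 n}{24}}$ from the other, which is not a comparison of like powers of $q$. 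Moreover, on the left-hand side with $N=\ell^2 n$ the middle term of the Hecke formula carries the factor $\ptfrac{3\ell^2 n}{\ell}=0$, so it is not ``in principle present'' as you claim; what this comparison actually yields is
\[
	p_m^+(\ell^4 n)+\ell^3 p_m^+(n)=\ell^3 p_{\ell^2 m}^+(\ell^2 n)+\ptfrac{3m}{\ell}\ell\, p_m^+(\ell^2 n),
\]
a true but different relation, from which the corollary does not follow.

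The correct move is to compare coefficients of $q^{\frac{n}{24}}$. By the formula in Section~\ref{sec:hecke} for the action of $T_{\frac52}(\ell^2)$ on holomorphic parts, the left side of the Hecke relation contributes $p_m^+(\ell^2 n)+\ell\ptfrac{3n}{\ell}p_m^+(n)+\ell^3 p_m^+(n/\ell^2)$, while the right side contributes $\ell^3 p^+_{\ell^2 m}(n)+\ell\ptfrac{3m}{\ell}p_m^+(n)$. The hypothesis gives $\ptfrac{3m}{\ell}=\ptfrac{3n}{\ell}$, so the two character terms cancel; and when the common symbol is nonzero we have $\ell\nmid n$, hence $\ell^2\nmid n$ and the term $p_m^+(n/\ell^2)$ is absent. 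What remains is exactly $p_m^+(\ell^2 n)=\ell^3 p^+_{\ell^2 m}(n)$. Note that killing the $\ell^3 p_m^+(n/\ell^2)$ term genuinely uses $\ell\nmid n$ (i.e.\ nonvanishing of the symbols), not merely their equality---a point your sketch, which lumps all the stray terms into one undifferentiated ``cancellation,'' does not isolate.
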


\begin{remark}  It is possible to derive results analogous to Theorem~\ref{thm:hecke}  and Corollary~\ref{cor:hecke}
involving the   operators
$T_\frac52\(\ell^{2k}\)$ for any $k$ (see, for example,  \cite{Ahlgren:SingularModuli} or \cite{Ahlgren:2012}). For brevity, we will not record these statements here.
\end{remark}

In the next section we provide some background material.  In Section~3 we adapt the method of \cite{DIT:InnerProducts} to construct the basis described in Theorem~1.  The last three sections contain the proofs of the remaining results.

\subsection*{Acknowledgements} We thank Kathrin Bringmann, Jan Bruinier,  and Karl Mahlburg for their comments.

\section{Weak harmonic Maass forms}
We require some preliminaries on  weak harmonic Maass forms (see for example  \cite{Bruinier:2004}, \cite{Ono:HMFs}, or \cite{Zagier:2009} for further details).
For convenience, we set $\Gamma:=\SL_2(\Z)$.
If $k\in \frac12\Z$ then
we say that  $f$ has weight $k$ and multiplier $\nu$ for $\Gamma$ if 
\begin{equation}\label{eq:slash}
	f \big|_k \gamma = \nu(\gamma) f
\end{equation}
for every $\gamma \in \Gamma$. Here $\big|_k$ denotes the slash operator defined for $\gamma = \pmatrix abcd \in \GL_2(\Q)$ with $\det(\gamma)>0$ by
\[
	f \big|_k \gamma := (\det \gamma)^{\frac k2} (cz+d)^{-k} f \left( \frac{az+b}{cz+d} \right).
\]
We choose the argument of each nonzero  $\tau\in \C$  in $(-\pi, \pi]$, and we define $\tau^k$ using the principal branch of the logarithm.
For any $k\in \frac 12 \Z$, let $\xi_k$ denote the Maass-type differential operator which acts on differentiable functions $f$ on $\H$ by
\begin{equation} \label{eq:def-xi}
	\xi_k(f) = 2iy^k \bar{\frac{\partial f}{\partial \bar{z}}}.
\end{equation}
This operator satisfies
\begin{equation} \label{eq:xi-modular}
	\xi_k\left( f\big|_k \gamma \right) = (\xi_k f) \big|_{2-k} \gamma
\end{equation}
for any $\gamma \in \Gamma$. So if $f$ is modular of weight $k$, then $\xi_k(f)$ is modular of weight $2-k$. Furthermore, $\xi_k(f) = 0$ if and only if $f$ is holomorphic. We define the weight $k$ hyperbolic Laplacian $\Delta_k$ by
\[
	\Delta_k := -\xi_{2-k} \circ \xi_k=-y^2\(\frac{\partial^2}{\partial x^2}+\frac{\partial^2}{\partial y^2}\)+i k y \(\frac{\partial}{\partial x}+i\frac{\partial}{\partial y}\).
\]

In this paper, we are interested in the multiplier system  $\ep$ which is attached to  the Dedekind eta function. An explicit description of $\ep$ is given, for instance, in \cite[Section 2.8]{Iwaniec:Topics}. 
Defining $e(\alpha):=e^{2\pi i \alpha}$,
we have $\ep(\pmatrix 1101)=e(\tfrac 1{24})$ and $\ep(-I)=e(-\tfrac 14)$. For $\gamma=\pmatrix abcd$ with $c>0$, we have
\begin{equation} \label{eq:def-eta-mult}
	\ep(\gamma) = e\left( \frac{a+d-3c}{24c} - \frac{1}{2}s(d,c) \right),
\end{equation}
where $s(d,c)$ is the Dedekind sum
\begin{equation} \label{eq:def-dedekind-sum}
	s(d,c) := \sum_{r=1}^{c-1} \left( \frac rc - \frac 12 \right) \left( \frac {dr}c - \left\lfloor \frac {dr}c \right\rfloor - \frac 12 \right).
\end{equation}
If $f\neq 0$ satisfies \eqref{eq:slash} with $\nu=\ep$ then we must have  $2k\equiv 1 \pmod{4}$, since
\begin{equation} \label{eq:consistency}
	(-1)^{-k} f = f \big|_k (-I) = \ep(-I) f.
\end{equation}
In what follows, we assume this consistency condition so that the forms in question are not identically zero.

Suppose that $f:\H\to\C$ is real analytic and satisfies
\begin{equation}\label{eq:tran}
	f\big|_k \gamma = \ep(\gamma) f
\end{equation}
for all $\gamma \in \Gamma$. Then $f$ has a Fourier expansion at $\infty$ which is supported on exponents of the form $n/24$ with $n\equiv 1\pmod {24}$. If, in addition, $f$ satisfies
\begin{equation}\label{eq:harm}
	\Delta_k f = 0,
\end{equation}
then by the discussion in Section~\ref{sec:whit} below we have the Fourier expansion
\begin{equation} \label{eq:fourier-exp}
	f(z) = \sum_{n\equiv 1 \, (24)} a^+(n) q^{\frac{n}{24}} + \sum_{n\equiv -1\,(24)} a^-(n) \Gamma(1-k,\tfrac{\pi n y}{6}) q^{-\frac{n}{24}}.
\end{equation}
Let $H_k^!(\ep)$ denote the space of functions satisfying \eqref{eq:tran} and  \eqref{eq:harm} 
with the additional property that 
 only finitely many of the $a^\pm(n)$ with $n \leq 0$  in  \eqref{eq:fourier-exp} are nonzero.
We call elements of $H_k^!(\ep)$  {\it weak harmonic  Maass forms of weight $k$ and multiplier $\ep$}.  Note that
these forms are allowed to have poles in the nonholomorphic part.
Let $S_k(\ep) \subseteq M_k(\ep) \subseteq M_k^!(\ep) \subseteq H_k^!(\ep)$ denote the subspaces of cusp forms, modular forms, and weakly holomorphic modular forms, respectively.

The next lemma  follows from a computation.  Care must be taken with the two cases $m>0$ and $m<0$.
\begin{lemma}\label{xilemma}
 
For any $m$ and any constant $c$, we have
\[
	\xi_\frac52\left(c\cdot\beta(my)q^{-\frac{m}{24}}\right)=
	- \overline c\cdot \left(\tfrac{6}{\pi m}\right)^{\frac 32}q^{\frac{m}{24}}.
\]
\end{lemma}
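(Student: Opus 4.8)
The plan is to reduce the statement to a direct differentiation, treating $m>0$ and $m<0$ uniformly until the very end. I would write the given function as $f=c\,g(y)\,q^{-\frac m{24}}$, where $g(y):=\beta(my)=\Gamma\!\left(-\tfrac32,\tfrac{\pi m y}6\right)$ depends only on $y$ and $q^{-\frac m{24}}$ is holomorphic. Since $\partial/\partial\bar z=\tfrac12(\partial_x+i\partial_y)$ annihilates holomorphic functions and $\partial y/\partial\bar z=\tfrac i2$, only the factor $g$ contributes, and I would obtain
\[
	\frac{\partial f}{\partial\bar z}=\frac{ic}2\,g'(y)\,q^{-\frac m{24}}.
\]

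Next I would differentiate the incomplete gamma function. From $\frac{d}{dx}\Gamma(s,x)=-e^{-x}x^{s-1}$ and the chain rule applied to the inner function $x=\tfrac{\pi m}6 y$, one finds
\[
	g'(y)=-\frac{\pi m}6\,e^{-\frac{\pi m y}6}\left(\frac{\pi m y}6\right)^{-\frac52}=-e^{-\frac{\pi m y}6}\ptfrac{\pi m}6^{-\frac32}y^{-\frac52},
\]
where every fractional power uses the principal branch fixed in Section~2 (the factorization of $(\tfrac{\pi m y}6)^{-5/2}$ is legitimate since $y>0$). Substituting into $\xi_\frac52(f)=2iy^\frac52\,\overline{\partial f/\partial\bar z}$, using $2i\cdot(-\tfrac i2)=1$ and the identity $\overline{q^{-\frac m{24}}}=q^{\frac m{24}}e^{\frac{\pi m y}6}$, the powers $y^{\pm5/2}$ cancel and the two exponentials cancel, leaving
\[
	\xi_\frac52(f)=-\,\bar c\,\overline{\ptfrac{\pi m}6^{-\frac32}}\;q^{\frac m{24}}.
\]

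It remains to check $\overline{\ptfrac{\pi m}6^{-\frac32}}=\ptfrac6{\pi m}^{\frac32}$, and this is exactly the point where the two cases must be distinguished, which I expect to be the only genuine obstacle. For $m>0$ both sides are the same positive real number. For $m<0$ the base $\tfrac{\pi m}6$ is negative, so with argument $\pi$ the principal branch gives $\ptfrac{\pi m}6^{-\frac32}=i\,|\tfrac{\pi m}6|^{-3/2}$ and hence $\overline{\ptfrac{\pi m}6^{-\frac32}}=-i\,|\tfrac{\pi m}6|^{-3/2}$; since $\tfrac6{\pi m}$ is likewise negative one computes $\ptfrac6{\pi m}^{\frac32}=-i\,|\tfrac{\pi m}6|^{-3/2}$ as well, so the two agree. (Concisely, for real $x\neq0$ the principal branch obeys $\overline{x^{-3/2}}=(1/x)^{3/2}$.) The subtlety worth emphasizing is that for $m<0$ the derivative $g'(y)$ is purely imaginary rather than real, so the conjugation built into $\xi_\frac52$ genuinely acts and must be carried through the branch computation; one should also confirm that $\Gamma(-\tfrac32,\cdot)$ and its derivative are evaluated along an integration contour compatible with this branch. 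With this identity the lemma follows.
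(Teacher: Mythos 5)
Your computation is correct, and it is exactly the argument the paper leaves implicit: the authors state only that the lemma ``follows from a computation'' in which ``care must be taken with the two cases $m>0$ and $m<0$,'' and your proof carries out that differentiation, with the branch analysis of $\overline{\left(\tfrac{\pi m}{6}\right)^{-3/2}} = \left(\tfrac{6}{\pi m}\right)^{3/2}$ for $m<0$ supplying precisely the case distinction they flag. Your closing remark about the contour defining $\Gamma\left(-\tfrac32,\cdot\right)$ at negative arguments is the right convention (the principal-branch limit from above), consistent with the paper's choice of $\arg \in (-\pi,\pi]$.
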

Suppose that $f \in H_{\frac 52}^!(\ep)$ has Fourier expansion \eqref{eq:fourier-exp}. By Lemma \ref{xilemma} and \eqref{eq:xi-modular} we find that
\[
	\xi_\frac52(f) = - \sum_{n\equiv -1\,(24)} \bar{a^-(n)} (\tfrac{6}{\pi n})^{\frac 32} q^{\frac n{24}} \in M_{-\frac 12}^!(\bar{\ep}).
\]
Since $S_{-\frac 12}(\bar{\ep})=\{0\}$ and $S_{\frac 52}(\ep)=\{0\}$, we see that $f=0$ if and only if $a^\pm(n)=0$ for all $n<0$. 
Because of this, each form  $h_m$ in Theorem \ref{thm:basis} is uniquely determined by a single term.
If $m<0$ this term is   $q^{\frac {m}{24}}$ and if   $m>0$ this term is  $\beta(-my)q^{\frac{m}{24}}$.

\section{Construction of  harmonic Maass forms and proof of Theorem~\ref{thm:basis}} \label{sec:construction}
In order to construct the basis described in Theorem~\ref{thm:basis}, we first construct Poincar\'e series attached to 
the usual Whittaker functions (similar constructions can be found in \cite{DIT:CycleIntegrals, DIT:InnerProducts, BJO:Hilbert, BDR:MockPeriodFunctions, JKK:Maass, Bruinier:BorcherdsProducts} among others).
 It turns out that for positive $m$ these series are identically zero.  So we must differentiate 
with respect to an auxiliary parameter $s$ in order to obtain nontrivial forms in this case.  The construction is carried out in several 
subsections and is summarized in Proposition~\ref{prop:h-basis} below.
\subsection{Poincar\'e series}
In this section, fix $k\in \frac 12 \Z$ with $2k\equiv 1\pmod{4}$.
Suppose that $\phi(z)$ is a smooth function satisfying $\phi(z+1)=e\(\tfrac{1}{24}\)\phi(z)$. Since $\ep\(\pmatrix 1n01 \gamma\) = e\(\tfrac{n}{24}\)\ep\(\gamma\)$, the expression
\[
	\bar{\ep}\(\gamma\)(cz+d)^{-k} \phi\(\gamma z\)
\]
only depends on the coset $\Gamma_\infty \gamma$, where $\Gamma_\infty := \left\{ \pmatrix 1n01 : n\in \Z \right\}$. So the series
\[
	P(z) := \sum_{\gamma\in \Gamma_\infty \setminus \Gamma} \bar{\ep}(\gamma)(cz+d)^{-k} \phi(\gamma z)
\]
is well-defined if $\phi$ is chosen so that it converges absolutely. Each coset of $\Gamma_\infty \setminus \Gamma$ corresponds to a pair $(c,d)$ with $c\in \Z$ and $(d,c)=1$. 
Because of    \eqref{eq:consistency} the terms corresponding to $c$ and $-c$ are equal.

Let $m\equiv 1 \pmod{24}$ and define
\begin{equation}\label{eq:phimphi0}
	\phi_m(z) := \phi^0 \pfrac{y}{24}  e \pfrac{mx}{24},
\end{equation}
with $\phi^0(y)=O(y^\alpha)$ as $y\to 0$ for some $\alpha \in \R$. By comparison with the  Eisenstein series of weight 
$k+2\alpha$ on $\SL_2(\Z)$, 
we find that
\[
	P_m(z) := \frac{1}{2}\sum_{\gamma\in \Gamma_\infty \setminus \Gamma} \bar{\ep}(\gamma)(cz+d)^{-k} \phi_m(\gamma z)
\]
converges absolutely for $k>2-2\alpha$.

The function $P_m(z) - \phi_m(z)$ has polynomial growth as $y\to \infty$, and $e\(-\tfrac{x}{24}\)\(P_m(z)-\phi_m(z)\)$ is periodic with period 1. So we have the Fourier expansion
\[
	e\(-\tfrac{x}{24}\)\(P_m(z)-\phi_m(z)\) = \sum_{n\in \Z} a(n,y) e(nx)
\]
with
\[
	a(n,y) = \int_0^1 e\(-\tfrac{x}{24}\)\(P_m(z)-\phi_m(z)\)e(-nx)\,  dx.
\]
Using a standard argument (see, for example,   \cite[Proposition 3.1]{BJO:Hilbert})
we compute the Fourier coefficients $a(n,y)$ as
\begin{align*}
	a(n,y) &= \int_0^1 e\(-\tfrac{x}{24}\) \sum_{\substack{\gamma\in \Gamma_\infty \setminus \Gamma\\c>0}} \bar{\ep}(\gamma)(cz+d)^{-k} \phi(\gamma z) e(-nx)\,  dx \\
	&= \sum_{\substack{(c,d)=1\\c>0}} \frac{\bar{\ep}(\gamma)}{c^k} \int_\frac cd^{1+\frac cd} e\pfrac{-x+d/c}{24} z^{-k} \phi\left( \frac{a}{c} - \frac{1}{c^2z} \right) e\left( -nx + \frac{nd}{c} \right) \, dx \\
	&= \sum_{\substack{c>0\\d \bmod c^*}} \frac{\bar{\ep}(\gamma)}{c^k} e\pfrac{\bar{d}m+d(24n+1)}{24c} \\
	& \hphantom{=\sum_{d\bmod c}}\times \int_{-\infty}^\infty z^{-k} \phi^0\pfrac{y}{24c^2|z|^2} e\left( -\frac{mx}{24c^2|z|^2} - \(n+\tfrac{1}{24}\)x\right)\, dx.
\end{align*}
Here $  {c^*}$ indicates that the sum is restricted to residue classes coprime to $c$, and $\bar{d}$ denotes the inverse of $d$ modulo $c$.
The second equality comes from writing $\gamma z = \frac{a}{c} - \frac{1}{c^2(z+d/c)}$ and from making the change of variable $x\to x-d/c$. The last equality comes from writing  $d=d'+\ell c$ with $0\leq d'<c$ and gluing together the integrals for each $\ell$.

If we write $m=24m'+1$, then by \eqref{eq:def-eta-mult} we obtain
\begin{align*}
	\sum_{d \bmod c^*} \bar{\ep}(\gamma) e\pfrac{\bar{d}m+d(24n+1)}{24c} = \sqrt{i} K(m',n,c),
\end{align*}
where $K(m,n,c)$ denotes the Kloosterman sum
\begin{equation}\label{eq:kloos}
	K(m,n;c) := \sum_{d\bmod c^*} e^{\pi i s(d,c)} e\pfrac{\bar{d}m+dn}{c}.
\end{equation}

Therefore we have
\begin{equation}\label{eq:pmdef}
	P_m(z) = \phi_m(z) + \sum_{n\in \Z} c(n,y) e\(\(n+\tfrac{1}{24}\)x\)
\end{equation}
with
\begin{equation} \label{eq:fourier-coeff-integral}
	c(n,y) = \sqrt{i}\sum_{c>0} \frac{K(m',n;c)}{c^k} \int_{-\infty}^\infty z^{-k} \phi^0\left(\frac{y}{24c^2|z|^2}\right) e\left( -\frac{mx}{24c^2|z|^2} - \(n+\tfrac{1}{24}\)x \right)\, dx.
\end{equation}

\subsection{Whittaker functions and nonholomorphic Maass-Poincar\'e series}\label{sec:whit}
The Poincar\'e series $P_m(z)$ clearly has the desired transformation behavior; in order to construct  harmonic forms, we specialize $\phi^0(y)$ to be a function which has the desired behavior under $\Delta_k$.  
The Whittaker functions $M_{\mu,\nu}(y)$ and $W_{\mu,\nu}(y)$ are linearly independent solutions of Whittaker's differential equation
\begin{equation} \label{eq:whittaker-diff-eq}
	w''+\left( -\frac 14 + \frac{\mu}{y} + \frac{1-4\nu^2}{4y^2} \right)w=0,
\end{equation}
and are defined in terms of confluent hypergeometric functions (see \cite[\S 13.14]{nist} for definitions and properties). Using \eqref{eq:whittaker-diff-eq} we see after a computation that 
\[
	  y^{-\frac k2}  M_{\sgn(n)\frac k2,s- \frac 12}(4\pi |n| y) e(nx)\qquad \text{and} \qquad  y^{-\frac k2}  W_{\sgn(n)\frac k2,s- \frac 12}(4\pi |n| y) e(nx)
\]
are linearly independent solutions of the differential equation
\begin{equation}\label{eq:deltadiff}
	\Delta_k F(z) = \(s-\tfrac k2\)\(1-\tfrac k2-s\) F(z).
\end{equation}
At the special value $s=\frac k2$, we have (by \cite[(13.18.2), (13.18.5)]{nist})
\begin{equation} \label{eq:M-eval}
	y^{-\frac k2} M_{\sgn(n)\frac k2,\frac{k-1}2}(y) = e^{-\sgn(n)\frac y2}
\end{equation}
and
\begin{equation} \label{eq:W-eval}
	y^{-\frac k2} W_{\sgn(n)\frac k2,\frac{k-1}2}(y) = 
	\begin{cases}
		e^{-\frac y2} &\text{ if } n>0, \\
		\Gamma(1-k,y) e^{\frac y2} &\text{ if } n<0.
	\end{cases}
\end{equation}

For $m \equiv 1 \pmod{24}$ and $s\in \C$, define
\begin{equation}\label{eq:phims}
	\phi_{m,s}(z) := \left( 4\pi |m| \tfrac{y}{24} \right)^{-\frac k2} M_{\sgn(m)\frac k2,s-\frac 12}\(4\pi |m| \tfrac{y}{24}\) e\(\tfrac{mx}{24}\).
\end{equation}
Then $\phi_{m,s}(z) = O(y^{\text{Re}(s)-\frac k2})$ as $y\to 0$, so the series
\[
	P_m(z,s) := \frac{1}{2}\sum_{\gamma\in \Gamma_\infty \setminus \Gamma} \bar{\ep}(\gamma)(cz+d)^{-k} \phi_{m,s}(\gamma z)
\]
converges for $\text{Re}(s)>1$. Thus if one of the special values $s=\frac k2$ or $s=1-\frac k2$ is larger than $1$,
then $P_m(z,s)$ is harmonic at this value.
The following proposition describes the Fourier expansion of $P_{m}(z,s)$.

\begin{proposition} \label{prop:pm}
Let $k$ and $P_m(z,s)$ be as above, and suppose that $m\equiv 1\pmod{24}$ and $\textup{Re}(s)>1$.  Then we have
\begin{multline*}
	P_m(z,s) = \phi_{m,s}(z) + \sum_{n \equiv 1 \, (24)} g_{m,n}(s)L_{m,n}(s)
	 \(4\pi |m|\tfrac{y}{24}\)^{-\frac k2} W_{\sgn(n)\frac k2,s-\frac 12}\(4\pi |n| \tfrac{y}{24}\) e\(\tfrac{nx}{24}\)
\end{multline*}
where
\[
	g_{m,n}(s) := \frac{2\pi i^{-k+\frac 12} \Gamma(2s) \sqrt{|m/n|} }{\Gamma\left(s+\sgn(n)\frac k2\right)}
\]
and 
\[ 
	L_{m,n}(s) := 
		\begin{dcases}
			\sum\limits_{c>0} \frac{K(m',n';c)}{c} J_{2s-1}\left(\frac{\pi\sqrt{|mn|}}{6c}\right) &\text{ if } mn>0, \\ 
			\sum\limits_{c>0} \frac{K(m',n';c)}{c} I_{2s-1}\left(\frac{\pi\sqrt{|mn|}}{6c}\right) &\text{ if } mn<0.
		\end{dcases}
\]
Here $J_\alpha(x)$ and $I_\alpha(x)$ denote the usual Bessel functions and $K(m', n';c)$ is defined in \eqref{eq:kloos}.
\end{proposition}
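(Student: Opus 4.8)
The plan is to specialize the general Fourier expansion \eqref{eq:pmdef}--\eqref{eq:fourier-coeff-integral}, which holds for any admissible $\phi^0$, to the particular choice \eqref{eq:phims}, and then to evaluate the remaining integral in the $x$-variable. The arithmetic has already been separated off: the sum over residue classes collapsed to the Kloosterman sum together with the factor $\sqrt i$, independently of the analytic shape of $\phi^0$. Writing the proposition's index $n\equiv 1\pmod{24}$ as $n=24n'+1$, so that the exponent $(n'+\tfrac1{24})x$ in \eqref{eq:pmdef} equals $\tfrac{n}{24}x$, the entire content of the proposition is to show that after inserting
\[
	\phi^0(t) = \left(4\pi|m|\,t\right)^{-\frac k2} M_{\sgn(m)\frac k2,\, s-\frac 12}\!\left(4\pi|m|\,t\right),\qquad t=\tfrac{y}{24c^2|z|^2},
\]
into \eqref{eq:fourier-coeff-integral}, the inner integral produces the $W$-Whittaker function $W_{\sgn(n)\frac k2,\,s-\frac12}\!\left(4\pi|n|\tfrac{y}{24}\right)$ times a Bessel function, with the gamma factors collected into $g_{m,n}(s)$ and the sum over $c$ assembling $L_{m,n}(s)$.

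First I would normalize by the substitution $x=yu$, which sends $z^{-k}$ to $y^{-k}(u+i)^{-k}$ (the half-integral power read off the principal branch fixed in Section~2), the argument $t$ of $\phi^0$ to $\tfrac{1}{24c^2y(1+u^2)}$, and the oscillatory exponent to a function of $u$ alone. The resulting $u$-integral is a single classical integral of the type evaluated in \cite[Proposition~3.1]{BJO:Hilbert}, resting ultimately on the confluent-hypergeometric integral representations in \cite[\S13]{nist}; its value is a $W$-Whittaker function in the variable $y$ multiplied by a Bessel function whose argument is $\tfrac{\pi\sqrt{|mn|}}{6c}$, which is where the dependence on $c$ becomes concentrated. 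Summing over $c$ then yields $L_{m,n}(s)$, while collecting the leftover prefactors---the powers of $y$ and $c$, the $\sqrt i$ from the Kloosterman reduction, the phase produced by $(u+i)^{-k}$ (so that $\sqrt i$ and $i^{-k}$ combine to the factor $i^{-k+\frac12}$ appearing in $g_{m,n}$), and the gamma quotient coming from the integral---produces $g_{m,n}(s)$ together with the normalization $\left(4\pi|m|\tfrac{y}{24}\right)^{-\frac k2}$.

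The delicate point is the dichotomy between $J_{2s-1}$ and $I_{2s-1}$, which is governed by $\sgn(mn)$ and reflects whether the $u$-integral sits in the oscillatory regime ($mn>0$, giving $J_{2s-1}$) or the exponentially monotone regime ($mn<0$, giving $I_{2s-1}$). Keeping careful track of this, of the correct branch of $z^{-k}$ for half-integral $k$, and of the output indices $\sgn(n)\tfrac k2$ (so that \eqref{eq:W-eval} will later specialize correctly at $s=\tfrac k2$) is the main obstacle; the evaluation itself is then a bookkeeping of gamma functions. Convergence for $\textup{Re}(s)>1$ is inherited from the absolute convergence of $P_m(z,s)$ established above, and the appearance of the $W$-function rather than the growing $M$-function in every non-principal term is forced by the polynomial growth of $P_m(z,s)-\phi_{m,s}(z)$ as $y\to\infty$.
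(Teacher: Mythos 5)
Your proposal is correct and follows essentially the same route as the paper: specialize \eqref{eq:fourier-coeff-integral} to the choice \eqref{eq:phims}, substitute $x=\pm yu$, evaluate the resulting classical Whittaker integral as a $W$-function times a $J$- or $I$-Bessel function according to $\sgn(mn)$, and collect the remaining factors into $g_{m,n}(s)$ and the $c$-sum into $L_{m,n}(s)$. The only notable difference is the source cited for the key integral evaluation: the paper invokes \cite[p.~357]{Hejhal:SelbergTrace} (for $m>0$) and \cite[p.~33]{Bruinier:BorcherdsProducts} (for $m<0$) rather than \cite{BJO:Hilbert}, which the paper uses only for the unfolding and Kloosterman-sum extraction.
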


\begin{proof}
 In view of  \eqref{eq:phims} and \eqref{eq:phimphi0} we take
\[
	\phi^0(y):=\left( 4\pi |m| y \right)^{-\frac k2} M_{\sgn(m)\frac k2,s-\frac 12}(4\pi |m| y).
\]
We write
\[n=24n'+1.\]
Then \eqref{eq:pmdef} becomes
\[
	P_m(z,s) = \phi_m(z,s) + \sum_{n\equiv 1 \, (24)} c(n',y,s) e\(\tfrac{n}{24}x\).
\]
Using \eqref{eq:fourier-coeff-integral} we find that
\begin{multline}\label{eq:cnprimey}
c(n', y,s)=\sqrt{i}\sum_{c>0}\frac{K(m', n'; c)}{c^k}\\
\times \int_{-\infty}^\infty z^{-k}  \pfrac{4\pi |m| y}{24c^2 |z|^2}^{-\frac k2} M_{\sgn(m)\frac k2,s-\frac 12} \pfrac{4\pi |m| y}{24c^2 |z|^2} e\left(\frac{-mx}{24c^2|z|^2} -  \frac {nx}{24} \right)\,  dx.
\end{multline}
The integral in \eqref{eq:cnprimey} can be written as 
\begin{align} 
	&\(4\pi |m| \tfrac{y}{24}\)^{-\frac k2} c^k i^{-k} \! \int_{-\infty}^\infty \ptfrac{y-ix}{y+ix}^{-\frac k2} \!\!M_{\sgn(m)\frac k2,s-\frac 12} \pfrac{4\pi |m| y}{24c^2 (x^2+y^2)} e\left(\frac{-mx}{24c^2(x^2+y^2)} - \frac {nx}{24} \right)\,  dx \notag \\
	&=: \(4\pi |m| \tfrac{y}{24}\)^{-\frac k2} c^k i^{-k} I. \label{eq:intreduce}
\end{align}
The integral $I$ is computed in \cite[p. 33]{Bruinier:BorcherdsProducts} in the case $m<0$, and  the case $m>0$ is similar. Write $x=-yu$. Then
\begin{align*}
	I= y \int_{-\infty}^\infty \pfrac{1-i u}{1+i u}^{\frac k2} M_{\frac k2,s-\frac 12} \pfrac{4\pi m}{24 c^2 y (u^2+1)} e\left( \frac{mu}{24c^2y(u^2+1)} + \frac{nyu}{24} \right)\,  du.
\end{align*}
Now let $A=\frac{ny}{24}$ and $B=\frac{m}{24c^2y}$. By \cite[p. 357]{Hejhal:SelbergTrace} we have
\begin{align*}
	I &= y \, \Gamma(2s) 
	\cdot\begin{dcases}
		\dfrac{2\pi \sqrt{|B/A|}}{\Gamma(s+\frac k2)} W_{\frac k2,s-\frac 12}(4\pi |A|) J_{2s-1}\(4\pi \sqrt{|AB|}\) &\text{ if }A>0,\\
		\dfrac{2\pi \sqrt{|B/A|}}{\Gamma(s-\frac k2)} W_{-\frac k2,s-\frac 12}(4\pi |A|) I_{2s-1}\(4\pi \sqrt{|AB|}\) &\text{ if }A<0,
	\end{dcases} \\
	&= \frac{2\pi\Gamma(2s)\sqrt{|m/n|}}{c\,\Gamma(s+\sgn(n)\frac k2)} W_{\sgn(n) \frac k2,s-\frac 12} (4\pi |n|\tfrac{y}{24})
	\cdot \begin{dcases}
		J_{2s-1}\left(\frac{\pi}{6c}\sqrt{|mn|}\right) &\text{ if } n>0,\\
		I_{2s-1}\left(\frac{\pi}{6c}\sqrt{|mn|}\right) &\text{ if } n<0.
	\end{dcases}
\end{align*}
Combining this with  the expression for $m<0$ from \cite[p. 33]{Bruinier:BorcherdsProducts}, we obtain
\[
	I = \frac{2\pi\Gamma(2s)\sqrt{|m/n|}}{c \, \Gamma(s+\sgn(n)\frac k2)} W_{\sgn(n) \frac k2,s-\frac 12} \(4\pi |n| \tfrac{y}{24}\)
	\cdot \begin{dcases}
		J_{2s-1}\left(\frac{\pi}{6c}\sqrt{|mn|}\right) &\text{ if } mn>0,\\
		I_{2s-1}\left(\frac{\pi}{6c}\sqrt{|mn|}\right) &\text{ if } mn<0.
	\end{dcases}
\]
Using this with \eqref{eq:cnprimey} and \eqref{eq:intreduce} we find that 
\begin{multline*}
	c(n', y,s)=\frac{2\pi i^{-k+\frac 12} \Gamma(2s) \sqrt{|m/n|}}{\Gamma(s+\sgn(n)\frac k2)} \(4\pi |m|\tfrac{y}{24}\)^{-\frac k2}  W_{\sgn(n)\frac k2,s-\frac 12} \(4\pi |n| \tfrac{y}{24}\) \\
	\times \sum_{c>0} \frac{K(m',n';c)}{c} \cdot
	\begin{dcases}
		J_{2s-1}\left(\frac{\pi}{6c}\sqrt{|mn|}\right) &\text{ if } mn>0,\\
		I_{2s-1}\left(\frac{\pi}{6c}\sqrt{|mn|}\right) &\text{ if } mn<0.
	\end{dcases} 
\end{multline*}
\end{proof}

\subsection{Derivatives of nonholomorphic Maass-Poincar\'e series in weight $\frac52$.}
We specialize Proposition~\ref{prop:pm} to the situation  $k=\frac 52$ and  $s= \frac k2=\frac54$.
Using \eqref{eq:M-eval} and \eqref{eq:W-eval} and noting that $g_{m,n}(s)=0$ for $n<0$,  we obtain

\begin{equation}\label{eq:pmz54}
	 P_m\(z,\tfrac 54\) = q^{\frac {m}{24}} - 2\pi\sum_{0<n\equiv 1 \, (24)} |\tfrac{n}{m}|^{\frac 34} L_{m,n}\(\tfrac 54\) q^{\frac{n}{24}}.
\end{equation}
Since $S_{\frac 52}(\ep) = \{0\}$, we see that  
\begin{equation}\label{eq:pmiszero}
P_m\(z, \tfrac54\)= 0\qquad\text{for $m>0$}.
\end{equation}

In order to construct nontrivial  forms when $m>0$, we apply the method of \cite{DIT:InnerProducts} and consider the derivative
\begin{equation}\label{eq:Qdef}
Q_m(z) := \partial_{s}|_{s=\frac 54} P_m(z,s).
\end{equation}
By \eqref{eq:deltadiff} we have
\[\Delta_\frac52\partial_s P_m(z, s)=(1-2s)P_m(z, s).\]
By \eqref{eq:pmiszero} we find that $\Delta_\frac52 Q_m(z)=0$;
it follows that 
\[Q_m(z)\in H^!_\frac52(\ep).\]

The following proposition gives the Fourier expansion of $Q_m(z)$, and is an analogue of Proposition~4 of \cite{DIT:InnerProducts}.

\begin{proposition} \label{prop:deriv}  
For $m>0$ let  $Q_m(z) $ be defined as in \eqref{eq:Qdef}.
Then we have
\begin{align*}
	Q_m(z) =& \(4\pi m \tfrac{y}{24}\)^{-\frac54} \partial_{s}|_{s=\frac 54} \left[ M_{\frac 54,s-\frac 12}\(4\pi m \tfrac{y}{24}\) - W_{\frac 54,s-\frac 12}\(4\pi m \tfrac{y}{24}\) \right] e\(\tfrac{mx}{24}\)- \tfrac{\Gamma'}{\Gamma}\ptfrac52 q^{\frac m{24}}  \\
	& - 2\pi\sum_{0<n\equiv 1\bmod 24} \ptfrac{n}{m}^{\frac34} \mathcal{L}_{m,n} \cdot q^{\frac n{24}} \\
	&- 2\pi \sum_{0>n\equiv 1\bmod 24} |\tfrac{n}{m}|^{\frac34} \Gamma\ptfrac52 L_{m,n}\ptfrac54\Gamma(-\tfrac32,4\pi |n| \tfrac{y}{24}) q^{\frac n{24}},
\end{align*}
where
\[
	\mathcal{L}_{m,n}  := \partial_s|_{s=\frac 54} \left[L_{m,n}(s)\right]=\sum_{c>0} \frac{K(m',n',c)}{c} \partial_s|_{s=\frac 54} \left[J_{2s-1}\left( \frac{\pi \sqrt{mn}}{6c} \right)\right].
\]
\end{proposition}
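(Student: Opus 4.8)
The plan is to obtain the stated expansion by differentiating the Fourier expansion of Proposition~\ref{prop:pm} term by term at $s=\tfrac54$, specialized to $k=\tfrac52$. Writing $Q_m(z)=\partial_s|_{s=\frac54}P_m(z,s)$ as in \eqref{eq:Qdef}, I would first note that the summand indexed by $n$ is $g_{m,n}(s)L_{m,n}(s)(4\pi m\tfrac y{24})^{-\frac54}W_{\sgn(n)\frac54,s-\frac12}(4\pi|n|\tfrac y{24})e(\tfrac{nx}{24})$, with the power of $y$ and the factor $e(\tfrac{nx}{24})$ independent of $s$. Assuming for the moment that $\partial_s$ commutes with the sum over $n$, the whole computation reduces to applying the product rule to each summand and to $\phi_{m,s}$, and then determining which of the resulting pieces survive at $s=\tfrac54$.

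The mechanism is that two different cancellations prune the product rule, and exactly one index escapes both. For $n<0$ the coefficient $g_{m,n}(s)$ carries $\Gamma(s-\tfrac54)$ in its denominator, which has a pole at $s=\tfrac54$; hence $g_{m,n}(\tfrac54)=0$ and only $g_{m,n}'(\tfrac54)L_{m,n}(\tfrac54)W(\tfrac54)$ survives. For $n>0$ with $n\neq m$, the vanishing $P_m(z,\tfrac54)=0$ from \eqref{eq:pmiszero} forces, through \eqref{eq:pmz54} and $S_{\frac52}(\ep)=\{0\}$, the relation $L_{m,n}(\tfrac54)=0$; consequently only the middle term $g_{m,n}(\tfrac54)L_{m,n}'(\tfrac54)W(\tfrac54)$ survives, producing $-2\pi(\tfrac nm)^{3/4}\mathcal L_{m,n}q^{n/24}$. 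The exceptional index is $n=m$, where the same reasoning gives $L_{m,m}(\tfrac54)=\tfrac1{2\pi}$ and $g_{m,m}(\tfrac54)=-2\pi$, both nonzero. This term must be grouped with $\partial_s\phi_{m,s}$; since \eqref{eq:M-eval} and \eqref{eq:W-eval} give $M_{\frac54,\frac34}=W_{\frac54,\frac34}$ at $s=\tfrac54$, and $g_{m,m}(\tfrac54)L_{m,m}(\tfrac54)=-1$, the two Whittaker derivatives assemble into $\partial_s[M_{\frac54,s-\frac12}-W_{\frac54,s-\frac12}]$, while the derivative of the scalar $g_{m,m}(s)L_{m,m}(s)$ contributes the remaining $q^{m/24}$ terms.

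For the explicit coefficients I would substitute \eqref{eq:M-eval} and \eqref{eq:W-eval} to replace each Whittaker function at $s=\tfrac54$: for $n>0$ one gets a pure exponential whose $y$-factor combines with $e(\tfrac{nx}{24})$ into $q^{n/24}$, and for $n<0$ one gets $\Gamma(-\tfrac32,4\pi|n|\tfrac y{24})$ times $q^{n/24}$. The needed $s$-derivatives of $g_{m,n}$ follow from elementary facts about $\Gamma$: because $1/\Gamma$ is entire with derivative $1$ at the origin, one finds $g_{m,n}'(\tfrac54)=-2\pi\sqrt{|m/n|}\,\Gamma(\tfrac52)$ for $n<0$, which after collecting powers of $|n/m|$ yields the coefficient $-2\pi\Gamma(\tfrac52)|n/m|^{3/4}L_{m,n}(\tfrac54)$ of the last line; and $g_{m,m}'(\tfrac54)=-2\pi(\Gamma'/\Gamma)(\tfrac52)$, which together with $L_{m,m}(\tfrac54)=\tfrac1{2\pi}$ produces the term $-(\Gamma'/\Gamma)(\tfrac52)q^{m/24}$. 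Assembling the three cases reproduces the asserted expansion, with the $n=m$ contribution to the middle sum absorbed into $-2\pi\sum_{n>0}(\tfrac nm)^{3/4}\mathcal L_{m,n}q^{n/24}$.

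The hard part will be the analytic justification for differentiating the Fourier series term by term, that is, for interchanging $\partial_s$ with $\sum_n$ near $s=\tfrac54$. I would establish this by the same comparison that gave absolute convergence of $P_m(z,s)$ for $\operatorname{Re}(s)>1$, now applied to the series obtained after differentiation: the Kloosterman-sum series weighted by the $s$-derivatives of $J_{2s-1}$ and $I_{2s-1}$, whose size in both the argument and the parameter is controlled in a neighborhood of $s=\tfrac54$, combined with standard bounds for the sums $K(m',n';c)$. Once this interchange is in place, the remaining work is the purely algebraic bookkeeping of the two pole/zero cancellations described above, which presents no essential difficulty.
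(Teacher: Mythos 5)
Your proposal is correct and follows essentially the same route as the paper: term-by-term differentiation of the expansion in Proposition~\ref{prop:pm} at $s=\tfrac54$, with the two pruning mechanisms ($g_{m,n}(\tfrac54)=0$ for $n<0$ via the pole of $\Gamma(s-\tfrac54)$, and $L_{m,n}(\tfrac54)=0$ for $0<n\neq m$ via \eqref{eq:pmiszero}), the exceptional $n=m$ term grouped with $\partial_s\phi_{m,s}$ to form $\partial_s[M-W]$, and the same evaluations $g_{m,n}'(\tfrac54)=-2\pi\sqrt{|m/n|}\,\Gamma(\tfrac52)$ and $g_{m,m}'(\tfrac54)=-2\pi\tfrac{\Gamma'}{\Gamma}(\tfrac52)$. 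The only difference is that you flag and sketch a justification for interchanging $\partial_s$ with the Fourier sum, a point the paper passes over silently, so if anything your write-up is slightly more careful than the published proof.
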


\begin{proof}
We compute  
\begin{multline*}
	\partial_s|_{s=\frac 54}  P_m(z,s) = \partial_s|_{s=\frac 54} \left[\phi_m(z,s)\right] \\ 
	\begin{aligned}
	&+ \sum_{n\neq 0} \partial_s|_{s=\frac 54}\left[ g_{m,n}(s)L_{m,n}(s) \right]\cdot\left(4\pi m \tfrac{y}{24}\right)^{-\frac54} W_{\sgn (n) \frac 54, \frac34} (4\pi |n| \tfrac{y}{24}) e\(\tfrac{nx}{24}\) \\
	&+ \sum_{n\neq 0} g_{m,n}\ptfrac54L_{m,n}\ptfrac54 \left(4\pi m \tfrac{y}{24}\right)^{-\frac54} \partial_s|_{s=\frac 54}\left[ W_{\sgn (n) \frac 54, s-\frac{1}{2}} \left(4\pi |n| \tfrac{y}{24}\right) \right] e\(\tfrac{nx}{24}\).
	\end{aligned}
\end{multline*}
By \eqref{eq:pmz54} and \eqref{eq:pmiszero} we see that if $m,n>0$, then
\begin{equation}\label{eq:lmniszero}
L_{m, n}\ptfrac54=
\begin{cases} 
	0 & \text{ if } m\neq n,\\
	\frac{1}{2\pi} & \text{ if } m=n.
\end{cases}\end{equation}						
Therefore the second sum reduces to 
\[
	-\(4\pi m \tfrac{y}{24}\)^{-\frac54} \partial_s|_{s=\frac 54} \left[ W_{\sgn (n) \frac 54, s-\frac{1}{2}} \left(4\pi m \tfrac{y}{24}\right) \right] e\(\tfrac{mx}{24}\).
\]

If $n>0$, then by \eqref{eq:W-eval}, the $n$-th term in the first sum is 
\[
	\partial_s|_{s=\frac 54} \left[g_{m,n}(s)L_{m,n}(s)\right] \ptfrac mn^{-\frac54} q^{\frac n{24}}.
\]
Using \eqref{eq:lmniszero} we find that 
\[
	\partial_s|_{s=\frac 54} \left[g_{m,n}(s)L_{m,n}(s)\right] = -\tfrac{\Gamma'}{\Gamma}\ptfrac52 \delta_{m,n} - 2\pi |\tfrac mn|^{\frac12} \mathcal{L}_{m,n}
\]
(where $\delta_{m,n}$ denotes the Kronecker delta function).

If $n<0$, then $g_{m,n}\pfrac 54=0$, so the       $n$-th term in the first sum is
\begin{multline*}
	-2\pi  \left|\tfrac mn\right|^\frac12 \Gamma\ptfrac52 L_{m,n}\ptfrac54 (4\pi m \tfrac{y}{24})^{-\frac54} W_{-\frac 54, \frac34} \left(4\pi |n| \tfrac{y}{24}\right) e\(\tfrac{nx}{24}\) \\
	= - 2\pi |\tfrac{n}{m}|^{\frac34} \Gamma\ptfrac52 L_{m,n}\ptfrac54 \Gamma(-\tfrac32,4\pi |n| \tfrac{y}{24}) q^{\frac n{24}}.  \qedhere
\end{multline*}
\end{proof}

We must evaluate the first two terms which appear in the expansion of $Q_m(z)$.
\begin{lemma}\label{lemma:firstterm}
Suppose that $k \notin \Z$ and that  $m>0$.  Then
\begin{multline} \label{eq:whittaker-derivative}
	 \(4\pi m \tfrac{y}{24}\)^{-\frac k2} \partial_{s}|_{s=\frac k2} \left[ M_{\frac k2,s-\frac 12}\(4\pi m \tfrac{y}{24}\) - W_{\frac k2,s-\frac 12}\(4\pi m \tfrac{y}{24}\) \right] e\(\tfrac{mx}{24}\) - \tfrac{\Gamma'}{\Gamma}(k) q^{\frac m{24}}   \\
	  = \left( \pi \cot \pi k - (-1)^k \pi \csc \pi k \right)q^{\frac m{24}} + (-1)^k \Gamma(k) \Gamma\(1-k,-4\pi m \tfrac{y}{24}\)q^{\frac m{24}}.
\end{multline}
\end{lemma}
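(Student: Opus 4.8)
The plan is to reduce the identity to standard facts about confluent hypergeometric functions. Write $w := 4\pi m \tfrac{y}{24} > 0$, so that $q^{\frac{m}{24}} = e^{-w/2}\, e\(\tfrac{mx}{24}\)$, and recall the relations $M_{\frac k2, s-\frac12}(w) = e^{-w/2} w^{s} M(s-\tfrac k2, 2s, w)$ and $W_{\frac k2, s-\frac12}(w) = e^{-w/2} w^{s} U(s-\tfrac k2, 2s, w)$, where $M(a,b,\cdot)$ and $U(a,b,\cdot)$ are the Kummer and Tricomi confluent hypergeometric functions. Setting $a := s - \tfrac k2$ and $b := 2s$, I would study $F(s) := w^{-k/2}\big[M_{\frac k2,s-\frac12}(w) - W_{\frac k2,s-\frac12}(w)\big] = e^{-w/2} w^{a}\big[M(a,b,w) - U(a,b,w)\big]$, since the term in the lemma is $\partial_s|_{s=k/2}F(s)$ times $e\(\tfrac{mx}{24}\)$. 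Because $M(0,b,w) = U(0,b,w) = 1$, the bracket vanishes at $s = \tfrac k2$ (where $a=0$) — this is exactly \eqref{eq:M-eval}--\eqref{eq:W-eval} — so the derivative of the $w^{a}$ prefactor contributes nothing and $\partial_s F|_{s=k/2} = e^{-w/2}\, \partial_s[M(a,b,w) - U(a,b,w)]|_{s=k/2}$.

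To compute this derivative I would use the connection formula $U(a,b,w) = \tfrac{\Gamma(1-b)}{\Gamma(a-b+1)} M(a,b,w) + \tfrac{\Gamma(b-1)}{\Gamma(a)} w^{1-b} M(a-b+1,2-b,w)$, valid near $b = k \notin \Z$. At $s = \tfrac k2$ the first coefficient equals $\Gamma(1-k)/\Gamma(1-k) = 1$, so the contribution $\big(1 - \tfrac{\Gamma(1-b)}{\Gamma(a-b+1)}\big)M(a,b,w)$ vanishes there; its $s$-derivative reduces to $-P'(\tfrac k2)$ with $P(s) := \Gamma(1-2s)/\Gamma(1-\tfrac k2 - s)$, and a direct logarithmic-derivative computation gives $-P'(\tfrac k2) = \psi(1-k)$, where $\psi := \Gamma'/\Gamma$. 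The second term of the connection formula carries the factor $1/\Gamma(a)$, which vanishes at $a = 0$ with $\partial_s[1/\Gamma(a)]|_{s=k/2} = 1$; hence only its derivative survives and it contributes $-\Gamma(k-1)\, w^{1-k} M(1-k, 2-k, w)$. Altogether I expect $\partial_s[M-U]|_{s=k/2} = \psi(1-k) - \Gamma(k-1)\, w^{1-k} M(1-k,2-k,w)$.

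Finally I would identify the hypergeometric term with an incomplete gamma function. Using the series identity $\gamma(1-k, -w) = \tfrac{(-w)^{1-k}}{1-k} M(1-k, 2-k, w)$ for the lower incomplete gamma, together with $\gamma(1-k,-w) = \Gamma(1-k) - \Gamma(1-k,-w)$ and $(k-1)\Gamma(k-1) = \Gamma(k)$, the term $-\Gamma(k-1)\, w^{1-k} M(1-k,2-k,w)$ becomes $-(-1)^k\pi\csc\pi k + (-1)^k\Gamma(k)\Gamma(1-k,-w)$ after applying the reflection formula $\Gamma(k)\Gamma(1-k) = \pi\csc\pi k$. Subtracting $\psi(k)\, q^{\frac{m}{24}}$ as in the statement and using $\psi(1-k) - \psi(k) = \pi\cot\pi k$ then yields the claimed right-hand side, once the factors are recombined via $e^{-w/2}\, e\(\tfrac{mx}{24}\) = q^{\frac{m}{24}}$.

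The main obstacle is bookkeeping with branches: the quotient $w^{1-k}/(-w)^{1-k}$ must be evaluated as $-(-1)^k$ using the argument convention (arguments in $(-\pi,\pi]$) fixed in Section~2, since $-w$ is a negative real. This single sign is what produces the factor $(-1)^k$ and the correct signs of the $\csc$ and incomplete-gamma terms; getting it right, and checking that the factors $1/\Gamma(a)$ and $1-\Gamma(1-b)/\Gamma(a-b+1)$ genuinely vanish to first order at $s = \tfrac k2$ (so that only their derivatives survive), are the places where care is needed. The hypothesis $k \notin \Z$ is used precisely to keep $\Gamma(1-b)$, $\Gamma(b-1)$, and the connection formula non-degenerate throughout a neighborhood of $s = \tfrac k2$.
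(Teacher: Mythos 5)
Your proof is correct and is essentially the paper's own argument, just phrased in Kummer/Tricomi notation ($M(a,b,w)$, $U(a,b,w)$ via NIST 13.2.42 and 8.5.1) rather than in Whittaker notation (NIST 13.14.33, 13.14.2, 13.6.5): the same connection-formula decomposition, the same observation that both coefficients vanish to first order at $s=\tfrac k2$ so only $\psi(1-k)$ and $\Gamma(k-1)$ survive, the same identification of the second solution with $\Gamma(1-k,-w)$ including the branch evaluation $w^{1-k}/(-w)^{1-k}=-(-1)^k$, and the same finish via the reflection formula and its logarithmic derivative.
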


\begin{proof}
By (13.14.33) of \cite{nist}, we have
\begin{align*}
	&\partial_s|_{s=\frac{k}{2}} \left[ M_{\frac k2,s-\frac 12}(y) - W_{\frac k2,s-\frac 12}(y) \right] \\
	& = \partial_s|_{s=\frac{k}{2}} \left[ M_{\frac k2,s-\frac 12}(y) - \frac{\Gamma(1-2s)}{\Gamma(1-s-k/2)} M_{\frac k2,s-\frac 12}(y) - \frac{\Gamma(2s-1)}{\Gamma(s-k/2)} M_{\frac k2,\frac 12-s}(y) \right] \\
	& = \partial_s|_{s=\frac{k}{2}} \left[1 - \frac{\Gamma(1-2s)}{\Gamma(1-s-k/2)}\right] M_{\frac k2,\frac {k-1}2}(y) - \partial_s|_{s=\frac{k}{2}}\left[\frac{\Gamma(2s-1)}{\Gamma(s-k/2)}\right] M_{\frac k2,\frac {1-k}{2}}(y) \\
	& = \frac{\Gamma'}{\Gamma}(1-k) M_{\frac k2,\frac {k-1}2}(y) - \Gamma(k-1) M_{\frac k2,\frac {1-k}{2}}(y).
\end{align*}
For the first term, we have (by \eqref{eq:M-eval})
\[
	y^{-\frac k2} M_{\frac k2,\frac {k-1}2}(y) = e^{-\frac y2}.
\]
For the second term, we use (13.14.2) and (13.6.5) of \cite{nist} to obtain
\[
	y^{-\frac k2} M_{\frac k2, \frac{1-k}{2}}(y) = (-1)^{k-1} (1-k) e^{-\frac y2} \left( \Gamma(1-k) - \Gamma(1-k,-y) \right).
\]
Therefore, the quantity on the left in \eqref{eq:whittaker-derivative} reduces to 
\[
  \left(\frac{\Gamma'}{\Gamma}(1-k) - (-1)^k \Gamma(k)\Gamma(1-k) - \frac{\Gamma'}{\Gamma}(k)\right) q^{\frac{m}{24}} + (-1)^k \Gamma(k) \Gamma\(1-k,-4\pi m \tfrac{y}{24}\) q^{\frac{m}{24}}.
\]
The lemma follows   after using the reflection formula
\[
	\Gamma(z)\Gamma(1-z) = \pi \csc \pi z,
\]
and its logarithmic derivative
\[
	\frac{\Gamma'}{\Gamma}(1-z) - \frac{\Gamma'}{\Gamma}(z) = \pi \cot \pi z.   \qedhere
\]
\end{proof}
\subsection{Proof of Theorem ~\ref{thm:basis}}
The next Proposition, which follows directly from Proposition~\ref{prop:deriv} and Lemma~\ref{lemma:firstterm},  summarizes the two  constructions.
\begin{proposition} \label{prop:h-basis}
	Let $P_m\(z, \tfrac54\)$ and $Q_m(z)$ be as defined in \eqref{eq:pmz54} and \eqref{eq:Qdef}.
For negative $m\equiv 1 \pmod{24}$, define
	\begin{equation}\label{hm1}
		h_m(z) := P_m\(z, \tfrac54\) = q^{\frac{m}{24}} - 2\pi \sum_{0<n\equiv 1\,(24)} |\tfrac nm|^{\frac 34} L_{m,n}\(\tfrac 54\) q^{\frac {n}{24}},
	\end{equation}
	and for positive $m\equiv 1 \pmod{24}$, define
	\begin{equation}\label{eq:hmm}\begin{aligned}
		h_m(z) := \frac1{\Gamma(\tfrac 52)} Q_m(z) = &-\tfrac 43 i \sqrt{\pi} q^{\frac m{24}}   - \tfrac 83 \sqrt{\pi} \sum_{0<n\equiv 1\,(24)} \ptfrac nm^{\frac 34} \mathcal{L}_{m,n} q^{\frac {n}{24}}  \\
		 &+ i \beta(-my)q^{\frac{m}{24}}  -2\pi \sum_{0>n\equiv 1\,(24)} |\tfrac nm|^{\frac 34} L_{m,n}\(\tfrac 54\) \beta(|n|y) q^{\frac{n}{24}}.
	\end{aligned}\end{equation}
	Then $h_m \in H_{\frac 52}^!(\ep)$, and for negative $m$ we have $h_m \in M_{\frac 52}^!(\ep)$.
\end{proposition}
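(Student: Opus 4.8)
The plan is to handle the two cases $m<0$ and $m>0$ separately, since $h_m$ is given by two genuinely different formulas. In both cases the transformation law \eqref{eq:tran} and the harmonicity \eqref{eq:harm} are already secured by the Poincar\'e series construction: $P_m(z,s)$ transforms with weight $\tfrac52$ and multiplier $\ep$ for every $s$ (so its $s$-derivative $Q_m$ does too, the automorphy factor being $s$-independent), and both $P_m(z,\tfrac54)$ and $Q_m$ are annihilated by $\Delta_{5/2}$. Thus the actual content of the Proposition is (i) reading off the claimed Fourier expansions and (ii) checking the finiteness condition that cuts $H_{\frac52}^!(\ep)$ out of the space of all weight-$\tfrac52$ harmonic forms.

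For $m<0$ I would argue as follows. Since $s=\tfrac54=\tfrac k2>1$ lies at the special value where \eqref{eq:deltadiff} forces harmonicity, $P_m(z,\tfrac54)\in\ker\Delta_{5/2}$, and its Fourier expansion is just the specialization of Proposition~\ref{prop:pm}. Substituting \eqref{eq:M-eval} and \eqref{eq:W-eval} and observing that the factor $g_{m,n}(\tfrac54)$ vanishes for every $n<0$ (because the denominator $\Gamma(s+\sgn(n)\tfrac k2)=\Gamma(s-\tfrac k2)$ has a pole at $s=\tfrac k2$) kills the entire nonholomorphic part and yields \eqref{eq:pmz54}. Hence $P_m(z,\tfrac54)$ is holomorphic on $\H$ with a single principal-part term $q^{m/24}$, so $h_m\in M_{\frac52}^!(\ep)\subseteq H_{\frac52}^!(\ep)$.

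For $m>0$ the harmonicity of $Q_m$ is already in hand via \eqref{eq:deltadiff} together with the vanishing \eqref{eq:pmiszero}, so the task is purely to simplify the expansion of Proposition~\ref{prop:deriv} into the shape \eqref{eq:hmm}. I would substitute the bracketed $\partial_s[M-W]$ term together with $-\tfrac{\Gamma'}{\Gamma}(\tfrac52)q^{m/24}$ using Lemma~\ref{lemma:firstterm} at $k=\tfrac52$, where the elementary values $\cot\tfrac{5\pi}2=0$, $\csc\tfrac{5\pi}2=1$, and $(-1)^{5/2}=i$ collapse that contribution to $\bigl(-i\pi+i\,\Gamma(\tfrac52)\,\beta(-my)\bigr)q^{m/24}$, after recognizing $\Gamma(-\tfrac32,-\tfrac{\pi m y}6)=\beta(-my)$. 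Dividing by $\Gamma(\tfrac52)=\tfrac34\sqrt\pi$ turns this into $\bigl(-\tfrac43 i\sqrt\pi+i\,\beta(-my)\bigr)q^{m/24}$, and carrying the same factor $\tfrac1{\Gamma(5/2)}$ through the two remaining sums of Proposition~\ref{prop:deriv}, together with $\Gamma(-\tfrac32,\tfrac{\pi|n|y}6)=\beta(|n|y)$, produces exactly \eqref{eq:hmm}. Finally, inspecting this expansion shows that the holomorphic part begins at $q^{m/24}$ with $m>0$ and that the only nonholomorphic term of negative index is the single $i\,\beta(-my)q^{m/24}$; hence the finiteness condition holds and $h_m\in H_{\frac52}^!(\ep)$.

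Since the Proposition is explicitly flagged as following ``directly'' from Proposition~\ref{prop:deriv} and Lemma~\ref{lemma:firstterm}, there is no conceptual obstacle; the one place demanding care is the constant-chasing in the $m>0$ case, specifically the branch value $(-1)^{5/2}=i$ under the stated principal-branch convention, the normalization $\Gamma(\tfrac52)=\tfrac34\sqrt\pi$, and the matching of incomplete-gamma arguments (including signs) with the definition $\beta(y)=\Gamma(-\tfrac32,\tfrac{\pi y}6)$. Getting the factor $i$ and the $\tfrac1{\Gamma(5/2)}$ normalization right is precisely what makes the nonholomorphic leading term come out as $i\,\beta(-my)q^{m/24}$ rather than some rescaling of it.
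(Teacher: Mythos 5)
Your proposal is correct and follows the paper's own route: the paper derives this proposition directly from Proposition~\ref{prop:deriv} and Lemma~\ref{lemma:firstterm} (specializing Proposition~\ref{prop:pm} at $s=\tfrac54$ for $m<0$, and using harmonicity of $Q_m$ via \eqref{eq:deltadiff} and \eqref{eq:pmiszero} for $m>0$), exactly as you do. Your constant-chasing is also right: $(-1)^{5/2}=i$, $\cot\tfrac{5\pi}{2}=0$, $\csc\tfrac{5\pi}{2}=1$, and $\Gamma(\tfrac52)=\tfrac34\sqrt{\pi}$ yield precisely the coefficients $-\tfrac43 i\sqrt{\pi}$, $-\tfrac83\sqrt{\pi}$, and the term $i\,\beta(-my)q^{m/24}$ in \eqref{eq:hmm}.
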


\begin{proof}[Proof of Theorem~\ref{thm:basis}]
By the discussion following Lemma \ref{xilemma}, there can be at most one 
element of  $H_\frac 52^!(\ep)$ having a Fourier expansion of the form \eqref{hm1} and at most one
having an expansion of the form \eqref{eq:hmm}.  
By the same discussion we find that $\{h_m\}$ spans $f\in H_\frac 52^!(\ep)$.
For $m>0$  we have 
\[
	\xi_{\frac 52} h_m(z) =   \ptfrac{6}{\pi m}^{\frac 32} q^{-\frac m{24}} + 2\pi \sum_{0>n\equiv 1\,(24)} \left|\tfrac nm\right|^{\frac 34} L_{m,n}\ptfrac54 \ptfrac{6}{\pi |n|}^{\frac 32} q^{-\frac n{24}}.
\]
By comparing principal parts, we conclude that 
  	$\xi_{\frac 52} h_m  = \ptfrac{6}{\pi m}^{\frac 32}  g_m $.

We have 
\[
	\bar{K(m,n;c)} = K(m,n;c),
\]
which results from a computation using the relation $s(-d,c) =- s(d,c)$.
It follows from \eqref{eq:hmm} that the coefficients $p^+_m(n)$ are real for $m\neq n$,
and that $p^+_m(m)$ is not real for  $m>0$ (cf. the example in the first section).
\end{proof}

\section{Rademacher's formula for $p(n)$}\label{section:rademacher}
  Writing $n=24n'-1$, we obtain
\[
	\xi_k(h_1(z)) = {\ptfrac{6}{\pi}}^{\frac 32}  q^{-\frac1{24}} + \sum_{n'>0} 2\pi n^{-\frac 34} \ptfrac{6}{\pi}^{\frac 32}  \sum_{c>0} \frac{K(0,-n';c)}{c} I_{\frac 32}\(\frac{\pi\sqrt{24n'-1}}{6c}\)  q^{n'-\tfrac{1}{24}}.
\]
The $I$-Bessel function satisfies
\[
	I_{\frac 32} (x) = \sqrt{\frac{2}{\pi}} \left( \frac{x\cosh x-\sinh x}{x^{3/2}} \right) = \sqrt{\frac{2}{\pi}} x^{1/2} \frac{d}{dx}\pfrac{\sinh x}{x},
\]
so that
\[
	I_{\frac 32} \left( \tfrac{\pi}{6} \sqrt{\tfrac 23} \sqrt{n-\tfrac{1}{24}} \right) = \frac{1}{2\sqrt{2}} \frac{c^\frac 32}{\pi^2} (24n-1)^{\frac 34} \frac{d}{dn} \left( \frac{\sinh\left(\tfrac{\pi}{c} \sqrt {\tfrac23} \sqrt{n-\tfrac{1}{24}}\right)}{\sqrt{n-\tfrac{1}{24}}} \right).
\]
Together with Theorem~\ref{thm:basis} this gives 
\begin{align*}
	\eta^{-1}(z)&=\ptfrac{\pi}{6}^{\frac 32} \xi_k(h_1(z)) \\
	 &= q^{-1/24} + \frac{1}{\pi \sqrt2} \sum_{n>0} \sum_{c>0} K(0,-n;c)\sqrt{c} \frac{d}{dn} \left( \frac{\sinh\left(\tfrac{\pi}{c} \sqrt {\tfrac23} \sqrt{n-\tfrac{1}{24}}\right)}{\sqrt{n-\tfrac{1}{24}}} \right) q^{n-\frac{1}{24}}.
\end{align*}
Equating coefficients of $q^{n-\frac {1}{24}}$ gives Rademacher's exact formula
\[
	p(n) = \frac{1}{\pi \sqrt2} \sum_{c>0} K(0,-n;c)\sqrt{c} \frac{d}{dn} \left( \frac{\sinh\left(\tfrac{\pi}{c} \sqrt {\tfrac23} \sqrt{n-\tfrac{1}{24}}\right)}{\sqrt{n-\tfrac{1}{24}}} \right).
\]

\section{Proof of Theorem~\ref{thm:innerprod}}
We proceed as  in \cite{DIT:InnerProducts} and  \cite{Duke:2011a}.
Let $\mathcal F(Y)$ denote the usual fundamental domain for $\SL_2(\Z)$, truncated at height $Y$.
We have the following (see, for example, \cite[Section~9]{Borcherds:1998}).
\begin{lemma}\label{innerprodlemma}
Suppose that $k\in \frac12\Z$, that $h\in H_{2-k}^!(\ep)$, and that $g\in M_k^!(\overline\ep)$.
Then we have 
\[ 
	\int_{\mathcal F(Y)}g(z)\overline{\xi_{2-k}h(z)} \cdot y^k\, \frac{dxdy}{y^2}=\int_{-\frac12+iY}^{\frac12+iY}g(z)h(z)\, dz.
\]
\end{lemma}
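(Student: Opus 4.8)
The plan is to apply Stokes' theorem to convert the regularized inner product into a boundary integral, following the strategy of \cite[Section~9]{Borcherds:1998}. The key observation is that the integrand $g(z)\overline{\xi_{2-k}h(z)}\,y^k$ can be recognized as (up to constants) an exact differential. I would begin by unwinding the definition \eqref{eq:def-xi} of the $\xi$-operator: since $\xi_{2-k}h = 2iy^{2-k}\overline{\partial h/\partial\bar z}$, taking the complex conjugate gives $\overline{\xi_{2-k}h(z)} = -2iy^{2-k}\,\partial h/\partial z$ (using that $y$ is real and $\overline{\overline{\partial h/\partial\bar z}} = \partial h/\partial z$ after conjugation, with the sign of $i$ flipping). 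Substituting this into the integrand, the factor $y^{2-k}\cdot y^k\cdot y^{-2} = 1$ collapses, leaving
\[
	g(z)\,\overline{\xi_{2-k}h(z)}\cdot y^k\,\frac{dx\,dy}{y^2} = -2i\,g(z)\,\frac{\partial h}{\partial z}\,dx\,dy.
\]

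Next I would verify that this planar density is exact. The natural candidate is the holomorphic $1$-form $\omega := g(z)h(z)\,dz$, whose exterior derivative is $d\omega = \partial_{\bar z}(g h)\,d\bar z\wedge dz$. Since both $g$ and $h$ transform with weight $k$ and $2-k$ respectively under $\bar\ep$ and $\ep$, the product $gh$ transforms with trivial multiplier and weight $2$, so $\omega$ is $\Gamma$-invariant; moreover $g$ is holomorphic, so $\partial_{\bar z}(gh) = g\,\partial_{\bar z}h$. Using $d\bar z\wedge dz = 2i\,dx\,dy$, this matches the integrand above up to the correct constant. Applying Stokes' theorem to $\mathcal F(Y)$ then reduces $\int_{\mathcal F(Y)}d\omega$ to a sum of boundary integrals: the contributions along the two vertical geodesics $\mathrm{Re}(z)=\pm\frac12$ cancel by the $\Gamma_\infty$-periodicity of $gh$ (the periodicity being exactly compatible since $gh$ has trivial multiplier), and the contributions along the two arcs of the unit circle cancel under the identification $z\mapsto -1/z$ induced by $S\in\Gamma$. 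What survives is precisely the integral over the top edge $\{x+iY : -\frac12\le x\le\frac12\}$, giving the right-hand side.

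The step requiring the most care is the bookkeeping of the boundary terms and orientations. One must check that the multiplier systems $\ep$ and $\bar\ep$ cancel exactly on each pair of identified edges — this is where the hypotheses $h\in H^!_{2-k}(\ep)$ and $g\in M^!_k(\bar\ep)$ are used — and that the weight-$2$ invariance of $\omega$ makes the pairing of edges under the generators $T$ and $S$ of $\Gamma$ produce cancellation rather than reinforcement. I would also confirm that the truncation at height $Y$ introduces no spurious contribution from the cusp: because $h$ is a weak harmonic Maass form its growth at $\infty$ is at most linear exponential, but the regularization is built precisely to isolate the top edge, so no additional limiting argument beyond recording the top-edge integral is needed here (the limit $Y\to\infty$ is deferred to the application of the lemma). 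The result then follows upon matching constants.
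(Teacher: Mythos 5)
Your overall strategy is exactly the paper's: recognize the integrand as an exact $1$-form built from $g(z)h(z)\,dz$ and apply Stokes' theorem on $\mathcal{F}(Y)$, with the vertical edges cancelling by periodicity and the arcs cancelling under $z\mapsto -1/z$. However, your key computation contains a genuine error. Double conjugation is the identity, so from $\xi_{2-k}h = 2iy^{2-k}\,\bar{\partial h/\partial\bar{z}}$ one gets
\[
	\bar{\xi_{2-k}h} \;=\; -2iy^{2-k}\,\frac{\partial h}{\partial \bar{z}},
\]
\emph{not} $-2iy^{2-k}\,\partial h/\partial z$ as you claim; your identity $\bar{\bar{\partial h/\partial\bar{z}}}=\partial h/\partial z$ is false. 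This is not a cosmetic slip: with your version the integrand becomes $-2i\,g\,(\partial h/\partial z)\,dx\,dy$, which is not any constant multiple of $d\bigl(gh\,dz\bigr)=g\,(\partial h/\partial\bar{z})\,d\bar{z}\wedge dz$, since $\partial h/\partial z$ and $\partial h/\partial\bar{z}$ are unrelated for a general harmonic $h$. So the claimed ``match up to the correct constant'' fails as written.

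After correcting the derivative, the precise identity (which is the paper's one-line proof) is
\[
	g(z)\,\bar{\xi_{2-k}h(z)}\cdot y^k\,\frac{dx\,dy}{y^2}
	\;=\; g(z)\,\frac{\partial h}{\partial\bar{z}}\,dz\wedge d\bar{z}
	\;=\; -\,d\bigl(g(z)h(z)\,dz\bigr),
\]
i.e.\ the integrand is $-d\omega$, not $+d\omega$. That minus sign cannot be deferred to ``matching constants at the end,'' because it is exactly what makes the orientation bookkeeping come out right: the positively oriented boundary of $\mathcal{F}(Y)$ traverses the top edge from $\tfrac12+iY$ to $-\tfrac12+iY$, so
\[
	\int_{\mathcal{F}(Y)} g\,\bar{\xi_{2-k}h}\; y^k\,\frac{dx\,dy}{y^2}
	\;=\; -\oint_{\partial\mathcal{F}(Y)} gh\,dz
	\;=\; +\int_{-\frac12+iY}^{\frac12+iY} g(z)h(z)\,dz .
\]
With the sign you effectively have ($+d\omega$), the surviving boundary term would be the \emph{negative} of the lemma's right-hand side. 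Since the edge cancellations you describe are standard and correct, this sign is the only substantive content of the lemma beyond unwinding definitions; once you fix the conjugation identity and carry the minus sign through Stokes' theorem, your argument coincides with the paper's proof.
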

\begin{proof}
We have
\[ 
	g(z)\overline{\xi_{2-k}h(z)} \cdot y^k\, \frac{dxdy}{y^2}=g(z)\frac{\partial h}{\partial\overline z}\, dz\wedge d\overline z
= -d\left(g(z)h(z)dz\right).
 \]
The result follows from Stokes' theorem.
\end{proof}

For positive $n\equiv 1\pmod{24}$ we write the forms $g_n$ from \eqref{eq:g-m-construction} as
\[g_n(z)=q^{-\frac {n}{24}}+\sum \limits_{0<j\equiv 23(24)}c_n(j)q^\frac{j}{24}.\]
To prove Theorem~\ref{thm:innerprod}, we compute 
\begin{align}\label{eq:bigint}
&\int_{-\frac12+iY}^{\frac12+iY}g_n(z)h_m(z)\, dz\notag\\
&=\int_{-\frac12+iY}^{\frac12+iY}\(\, q^{-\frac {n}{24}}+\!\!\!\!\sum \limits_{0<j\equiv 23(24)} \!\!\! c_n(j)q^\frac{j}{24}\) \\
&\times \! \left(\!-\tfrac 43 i\sqrt{\pi}  q^{\frac m{24}} - \tfrac {8\sqrt{\pi}}3 \!\!\! \!\!   \sum_{0<k\equiv 1\,(24)}\!\!  \!\!\!\ptfrac km^{\frac 34} \mathcal{L}_{m,k} q^{\frac {k}{24}} \notag 
 + i \beta(-my)q^{\frac{m}{24}} - 2\pi \!\! \!\!\!\sum_{0>k\equiv 1\,(24)} \!\! \!\!\!|\tfrac km|^{\frac 34} L_{m,k}\(\tfrac 54\) \beta(|k|y) q^{\frac{k}{24}}\!\right)
 dz.
\end{align}

By Lemma~3.4 of \cite{Bruinier:2004} we have (for some constant $C$)
\begin{equation}\label{eq:coeffest}
|c_n(j)|\ll e^{C\sqrt{j}}, \quad \left|\mathcal{L}_{m,k}\right|\ll e^{C\sqrt{k}},\quad \text{and}\quad  \left|L_{m,k}\ptfrac54\right|\ll
e^{C\sqrt{|k|}}.
\end{equation}
From \cite[(8.11.2)]{nist} we have 
\begin{equation}\label{eq:gammaas}
\beta\(|k|Y\) = \Gamma\(-\frac32, \frac{\pi|k|Y}{6}\) \sim \pfrac{\pi|k|Y}{6}^{-\frac52}e^{-\frac{\pi|k|Y}{6}}\quad\quad
 \text{as}\quad\quad |k|Y\rightarrow\infty. 
\end{equation}

From these estimates we conclude that the expression in \eqref{eq:bigint} can be integrated term by term.
Also note that for each pair $j$, $k$ with  $j\equiv 23\pmod{24}$,  $k\equiv 1\pmod{24}$, and $j\neq -k$, we have 
\[\int_{-\frac12+iY}^{\frac12+iY}q^\frac j{24}q^\frac k{24}\, dz=0.\]
Since $n\neq m$ we obtain
\begin{align*}
\int_{-\frac12+iY}^{\frac12+iY}g_n(z)h_m(z)\, dz
&=
-\tfrac {8\sqrt{\pi}}3  \ptfrac nm^{\frac 34} \mathcal{L}_{m,n} - 2\pi \sum \limits_{0>k\equiv 1\,(24)}
c_n\(|k|\)\left|\tfrac km\right|^\frac34L_{m,k}\ptfrac54\beta\(|k|Y\)\\
&=p_m^+(n)- 2\pi \sum \limits_{0>k\equiv 1\,(24)}
c_n\(|k|\)\left|\tfrac km\right|^\frac34L_{m,k}\ptfrac54\beta\(|k|Y\).
\end{align*}
By \eqref{eq:coeffest} and \eqref{eq:gammaas} we see that the sum goes to zero as $Y\rightarrow\infty$. 
So by Lemma~\ref{innerprodlemma},  \eqref{eq:def-reg-inner-product}, and Theorem~\ref{thm:basis} we obtain
\[
	\ptfrac6{\pi m}^\frac32\langle g_n, g_m\rangle_{\rm reg}= p_m^+(n). 
\]
\qed

\section{Proof of Theorem~\ref{thm:hecke}} \label{sec:hecke}
Let $\ell\geq 5$ be prime.
On the spaces $M^!_{-\frac12}(\overline\ep)$  and $H^!_\frac52(\ep)$ we have the Hecke operators
$T_{-\frac12}(\ell^2)$ and $T_{\frac52}(\ell^2)$.
For 
\[f=\displaystyle\sum_{n\equiv 23\,(24)} a(n)q^{n/24}\in M^!_{-\frac12}(\overline\ep)\]
 we have
\[
	f\big|T_{-\frac12}(\ell^2)=\sum_{n\equiv 23\,(24)} \(a(\ell^2n)+\ell^{-2}\ptfrac{-3n}{\ell}a(n)+\ell^{-3}a(n/\ell^2)\)q^{n/24}.
\]

There are also Hecke operators on the space $H^!_\frac52(\ep)$  (see, e.g. \cite{Bruinier:2010} or \cite[Section 7]{Bruinier:2010a}).
For primes $\ell\geq 5$, our Hecke operator $T_{\frac52}(\ell^2)$  is the same as
the usual Hecke operator of index $\ell^2$  on the space $H_\frac52^!\(\Gamma_0(576),\ptfrac {12}{\bullet}\)$, conjugated by $\pmatrix{24}001$.
In particular, for 
\[g=\displaystyle\sum_{n\equiv 1\,(24)} b(n)q^{n/24}\in M^!_{\frac52}(\ep)\]
 we have
\[ 
	g\big|T_{\frac52}(\ell^2)=\sum_{n\equiv 1\,(24)} \(b(\ell^2n)+\ell\ptfrac{3n}{\ell}b(n)+\ell^{3}b(n/\ell^2)\)q^{n/24},
\]
and the action of $T_{\frac52}(\ell^2)$ on the holomorphic part of a form $h\in H^!_\frac52(\ep)$ is given by the same formula.
 For $h\in H^!_\frac52(\ep)$ we have the relation 
\begin{equation}\label{eq:heckegen}
\xi_\frac52\(h\big|T_\frac52(\ell^2)\)=\ell^3\(\xi_\frac52 h\)\big|T_{-\frac12}(\ell^2)
\end{equation}
(see \cite[Proposition~7]{Bruinier:2010a}).

For $m>0$, we have $g_m=q^{-m/24}+O(q^{1/24})$.
Since the $g_m$ form a basis for $M^!_{-\frac12}(\overline\varepsilon)$, comparing principal parts gives the formula
\begin{equation}\label{eq:gmhecke}
g_m\big|T_{-\frac12}(\ell^2)=\ell^{-3}g_{\ell^2m}+\ptfrac{3m}\ell\ell^{-2}g_m.
\end{equation}
For $m<0$ we find in a similar way that 
\[
	h_m \big|T_\frac52(\ell^2)= \ell^{3} h_{\ell^2m} + \ptfrac{3m}\ell \ell \, h_m,
\]
which gives the theorem in this case.

Suppose that $m>0$.  By Theorem $1$ we   have $h_m\big|T_\frac52(\ell^2)=\sum c_nh_n$ for some coefficients $c_n$,
from which we obtain
\[
	\xi_\frac52\(h_m\big|T_\frac52(\ell^2)\) =\sum\bar{c_n}\ptfrac 6{\pi n}^{\frac32} g_n.
\]
On the other hand, the relation \eqref{eq:heckegen}
together with \eqref{eq:gmhecke}  gives
\[
	\xi_\frac52\(h_m\big|T_\frac52(\ell^2)\)=
	\ptfrac6{\pi m}^{\frac32}\(g_{\ell^2m}+\ptfrac{3m}\ell \ell \, g_m\).
\]
Since the $\{g_m\}$ are linearly independent, we see that the only nonzero coefficients in this linear combination are  $c_{\ell^2m}=\ell^3$ and $c_m=\pfrac{3m}\ell \ell$.
The theorem follows.
\qed

\bibliographystyle{plain}
\bibliography{bibliography}

\end{document}